\theoremstyle{plain}
\newtheorem{thm}{Theorem}[section]
\newtheorem{prop}[thm]{Proposition}
\newtheorem{cor}[thm]{Corollary}
\newtheorem{lem}[thm]{Lemma}
\theoremstyle{remark}
\theoremstyle{definition}
\newcommand{\R}{\mathbb{R}}                       
\newcommand{\N}{\mathbb{N}}                       
\newcommand{\Q}{\mathbf{Q}}                       
\newcommand{\0}{\mathbf{0}}                       
\newcommand{\cl}{\operatorname{cl}}               
\newcommand{\uc}{\operatorname{UC}}             
\newcommand{\lip}{\operatorname{LIP}}             
\newcommand{\lipc}{\operatorname{lip}}            
\newcommand{\lp}{\operatorname{L^p}}            
\newcommand{\supp}{\operatorname{supp}}      
\begin{document}

\title[Characterizing compact sets in $\lp$-spaces and its application]{Characterizing compact sets in $\lp$-spaces and its application}
\author{Katsuhisa Koshino}
\address[Katsuhisa Koshino]{Faculty of Engineering, Kanagawa University, Yokohama, 221-8686, Japan}
\email{ft160229no@kanagawa-u.ac.jp}
\subjclass[2020]{Primary 54C35; Secondary 46B50, 46E30, 57N20.}
\keywords{metric measure space, $\lp$-space, relatively compact, the Kolmogorov-Riesz theorem, average function, Hilbert space, Hilbert cube, lipschitz map}
\maketitle

\begin{abstract}
In this paper, we give a characterization of compact sets in $\lp$-spaces on metric measure spaces,
 which is a generalization of the Kolmogorov-Riesz theorem.
Using the criterion, we investigate the topological type of the space consisting of lipschitz maps with bounded supports.
\end{abstract}

\section{Introduction}

Characterizations of compact sets in function spaces, for instance, the Ascoli-Arzel\`a theorem, play important roles in the study of their topologies.
In this paper, we shall give a necessary and sufficient condition for subsets of $\lp$-spaces on metric measure spaces to be compact,
 and using it, we will decide the topology on the space consisting of lipschitz maps with bounded supports.
Let $X = (X,d,\mathcal{M},\mu)$ be a metric measure space,
 where $d$ is a metric on $X$,
 $\mathcal{M}$ is a $\sigma$-algebra of $X$,
 and $\mu$ is a measure on $\mathcal{M}$.
A measure space $X$ is \textit{Borel} if $\mathcal{M}$ contains the Borel sets of $X$.
A Borel measure space $X$ is called to be \textit{Borel-regular} if each $E \subset X$ is contained in a Borel set $B \subset X$ such that $\mu(B) = \mu(E)$.
Throughout this paper, $X$ is a Borel-regular Borel metric measure space such that every open ball with positive radius has a positive and finite measure.
Let $\R$ be the set of real numbers, $\N$ be the set of positive integers and $1 \leq p < \infty$.
We put
 $$\lp(X) = \bigg\{f : X \to \R \ \bigg| \ f \text{ is $\mathcal{M}$-measurable and } \int_X |f(x)|^p d\mu(x) < \infty\bigg\}$$
 with the following norm
 $$\|f\|_p = \bigg(\int_X |f(x)|^p d\mu(x)\bigg)^{1/p},$$
 where two functions that are coincident almost everywhere are identified.
The function space $\lp(X)$ is a Banach space, refer to \cite[Theorem~4.8]{Br}.

For a metric space $Y$, the symbol $B(y,r)$ stands for the closed ball centered at $y \in Y$ with radius $r > 0$.
Given a subset $E \subset X$, the characteristic function of $E$ is denoted by $\chi_E$,
 and more for a function $f : X \to \R$, $f\chi_E$ is defined by $f\chi_E(x) = f(x) \cdot \chi_E(x)$.
For each $f : \R^n \to \R$ and each $a \in \R^n$, where $n \in \N$,
 let $\tau_af$ be a function defined by $\tau_af(x) = f(x - a)$.
The following theorem proven by A.N.~Kolmogorov \cite{Kol} and M.~Riesz \cite{R} is useful for detecting compact sets in $\lp(\R^n)$,
 that is an $\lp$-version of the Ascoli-Arzel\`a theorem, refer to Theorem~5 of \cite{HH}.

\begin{thm}[Kolmogorov-Riesz]
A bounded set $F \subset \lp(\R^n)$, where $\R^n = (\R^n,d,\mathcal{M},\mu)$ is the Euclidean space with the Euclidean metric $d$, the Lebesgue measurable sets $\mathcal{M}$ and the Lebesgue measure $\mu$,
 is relatively compact if and only if the following conditions are satisfied.
\begin{enumerate}
 \item For each $\epsilon > 0$, there exists $\delta > 0$ such that $\|\tau_af - f\|_p < \epsilon$ for any $f \in F$ and $a \in \R^n$ with $|a| < \delta$.
 \item For each $\epsilon > 0$, there exists $r > 0$ such that $\|f\chi_{\R^n \setminus B(\0,r)}\|_p < \epsilon$ for any $f \in F$.
\end{enumerate}
\end{thm}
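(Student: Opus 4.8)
The plan is to reduce compactness to total boundedness, which is legitimate since $\lp(\R^n)$ is a Banach space, and to exploit the \emph{averaging operator}: for $\rho > 0$ write $A_\rho f(x) = \mu(B(\0,\rho))^{-1}\int_{B(x,\rho)} f\,d\mu$ for the mean value of $f$ over the ball $B(x,\rho)$ (well defined, since $f$ is integrable on balls of finite measure).

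For necessity, suppose $\cl F$ is compact, fix $\epsilon > 0$, and choose a finite $(\epsilon/3)$-net $f_1,\dots,f_k \in F$. To obtain (1), I would use that $\conti_c(\R^n)$, the continuous functions with compact support, is dense in $\lp(\R^n)$ and that the translation action $a \mapsto \tau_a g$ is continuous from $\R^n$ into $\lp(\R^n)$ for every fixed $g$; hence $\|\tau_a f_i - f_i\|_p < \epsilon/3$ once $|a|$ is below some threshold $\delta_i$, and $\delta = \min_i \delta_i$ together with the net gives (1). To obtain (2), note $\|f_i\chi_{\R^n \setminus B(\0,r)}\|_p \to 0$ as $r \to \infty$ by the dominated convergence theorem, so some common $r$ works for $f_1,\dots,f_k$, and the net finishes it.

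For sufficiency, suppose $F$ is bounded, say $\|f\|_p \le M$ for all $f \in F$, and that (1) and (2) hold; I will show $F$ is totally bounded. The two estimates that do the work are: (a) by Jensen's inequality, $\|A_\rho f - f\|_p \le \sup_{|a| \le \rho}\|\tau_a f - f\|_p$, so by (1) the quantity $\|A_\rho f - f\|_p$ is uniformly small over $F$ when $\rho$ is small; and (b) by H\"older's inequality, $\|A_\rho f\|_\infty \le \mu(B(\0,\rho))^{-1/p}M$ and $|A_\rho f(x) - A_\rho f(x')| \le \mu(B(\0,\rho))^{-1}M\,\mu(B(x,\rho) \triangle B(x',\rho))^{1 - 1/p}$, where the measure of the symmetric difference of two equal-radius balls tends to $0$, uniformly by translation invariance, as their centers approach one another; thus $\{A_\rho f \mid f \in F\}$ is uniformly bounded and equicontinuous on $\R^n$. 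Now fix $r$ from (2) and a small $\rho$ from (a). By the Arzel\`a--Ascoli theorem, $\{A_\rho f|_{B(\0,r)} \mid f \in F\}$ is relatively compact in $\conti(B(\0,r))$, hence totally bounded there, hence — since $B(\0,r)$ has finite measure — totally bounded in $\lp$ after extending by $0$ outside $B(\0,r)$. Finally, the triangle inequality bound $\|f - (A_\rho f)\chi_{B(\0,r)}\|_p \le 2\|A_\rho f - f\|_p + \|f\chi_{\R^n \setminus B(\0,r)}\|_p$ shows that $F$ lies within an arbitrarily prescribed distance of a totally bounded set, whence $F$ is totally bounded. I expect the main obstacle to be step (b): converting hypotheses (1) and (2) into genuine uniform equicontinuity and uniform boundedness of the averaged family, which is precisely what makes Arzel\`a--Ascoli applicable on $B(\0,r)$; the rest is bookkeeping with the triangle inequality and the fact that uniform convergence on a finite-measure set implies $\lp$-convergence.
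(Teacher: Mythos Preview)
The paper does not prove this theorem; it is quoted as background with a reference to \cite{HH}. Your outline is the standard strategy and matches in spirit the paper's proof of its generalization (Theorem~\ref{cpt.}): approximate $f$ by $(A_\rho f)\chi_E$ on a large ball $E$, and show the averaged family is relatively compact there. The necessity argument and the bookkeeping at the end are fine.

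There is, however, a genuine gap in step~(b) when $p=1$. Your equicontinuity estimate
\[
|A_\rho f(x) - A_\rho f(x')| \le \mu(B(\0,\rho))^{-1}\,M\,\mu\bigl(B(x,\rho)\triangle B(x',\rho)\bigr)^{1-1/p}
\]
has exponent $1-1/p=0$ at $p=1$, so the right-hand side is the constant $\mu(B(\0,\rho))^{-1}M$ and gives no modulus of continuity. (This is precisely the obstruction that forces G\'orka--Macios to assume $p>1$, and the reason the paper treats $p=1$ separately in Proposition~\ref{equiconti.}.) The fix in $\R^n$ is to use translation invariance and hypothesis~(1) directly rather than the symmetric-difference H\"older bound: writing $a=x-x'$, one has
\[
A_\rho f(x') - A_\rho f(x) \;=\; \frac{1}{\mu(B(\0,\rho))}\int_{B(x,\rho)}\bigl(\tau_a f(y)-f(y)\bigr)\,d\mu(y)\;=\;A_\rho(\tau_a f - f)(x),
\]
whence $|A_\rho f(x')-A_\rho f(x)|\le \mu(B(\0,\rho))^{-1/p}\|\tau_a f-f\|_p$, and (1) makes this uniformly small over $F$ once $|a|$ is small. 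With this replacement your Arzel\`a--Ascoli step goes through for all $p\ge 1$.
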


Fixing $f \in \lp(X)$ and $r > 0$, we can define the following function $A_rf : X \to \R$ by
 $$A_rf(x) = \frac{1}{\mu(B(x,r))}\int_{B(x,r)} f\chi_{B(x,r)}(y) d\mu(y),$$
 which is called \textit{the average function of $f$}.
It is known that average functions are $\mathcal{M}$-measurable, see \cite[Theorem~8.3]{Ye}.
A space $X$ is \textit{doubling} provided that the following condition is satisfied.
\begin{itemize}
 \item There exists $\gamma \geq 1$ such that $\mu(B(x,2r)) \leq \gamma\mu(B(x,r))$ for any point $x \in X$ and any positive number $r > 0$.
\end{itemize}
The above $\gamma$ is called \textit{the doubling constant}.
P.~G\'{o}rka and A.~Macios \cite{GM} generalized the Kolmogorov-Riesz theorem for doubling metric measure spaces as follows:

\begin{thm}
Let $X$ be a doubling metric measure space, $F \subset \lp(X)$ be a bounded subset and $p > 1$.
Suppose that
 $$\inf\{\mu(B(x,r)) \mid x \in X\} > 0$$
 for any $r > 0$.
Then $F$ is relatively compact if and only if the following hold.
\begin{enumerate}
 \item For every $\epsilon > 0$, there is $\delta > 0$ such that for each $f \in F$ and each $r \in (0,\delta)$, $\|A_rf - f\|_p < \epsilon$.
 \item For each $\epsilon > 0$, there exists a bounded set $E$ in $X$ such that $\|f\chi_{X \setminus E}\|_p < \epsilon$ for any $f \in F$.
\end{enumerate}
\end{thm}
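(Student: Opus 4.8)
The plan is to prove both implications by showing that $F$ is relatively compact if and only if it is totally bounded, which is legitimate because $\lp(X)$ is complete.

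\emph{Necessity.} Suppose $\cl F$ is compact. The key computational fact is that the averaging operators are uniformly bounded on $\lp(X)$: Jensen's inequality gives $|A_rf(x)|^p\leq\mu(B(x,r))^{-1}\int_{B(x,r)}|f|^p\,d\mu$, and then Fubini's theorem together with $\mu(B(x,r))\geq\gamma^{-1}\mu(B(y,r))$ for $y\in B(x,r)$ (from $B(y,r)\subset B(x,2r)$ and doubling) yields $\|A_rf\|_p\leq\gamma^{1/p}\|f\|_p$ for all $r>0$. Since Lipschitz functions with bounded support are dense in $\lp(X)$ (by regularity of Borel measures on metric spaces), and for such a $g$ the function $A_rg-g$ is supported in the $r$-neighbourhood of $\supp g$ where $|A_rg-g|\leq Lr$ ($L$ a Lipschitz constant of $g$), we get $\|A_rg-g\|_p\leq Lr\,\mu(U)^{1/p}\to0$ for a fixed bounded $U$, hence $A_rf\to f$ in $\lp(X)$ for every $f$. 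A finite $\eta$-net $\{f_1,\dots,f_n\}$ of $\cl F$, the uniform operator bound, and the estimate $\|A_rf-f\|_p\leq(\gamma^{1/p}+1)\|f-f_i\|_p+\|A_rf_i-f_i\|_p$ then upgrade this to the uniform statement~(1). The same net gives~(2): each $f_i$ satisfies $\|f_i\chi_{X\setminus B(x_0,R)}\|_p\to0$ as $R\to\infty$ by dominated convergence, so one bounded ball serves all $f\in F$.

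\emph{Sufficiency.} Assume (1) and (2); I will show $F$ is totally bounded. Fix $\epsilon>0$. By (1) pick $\delta>0$ and fix $\rho\in(0,\delta)$ with $\|A_sf-f\|_p$ small for all $f\in F$ and $s\in(0,\rho)$; by (2) pick a bounded set, contained in a ball $E=B(x_0,R)$, outside of which every $f\in F$ has small norm. The natural approximating family would be $\{A_\rho(f\chi_E):f\in F\}$: these functions are supported in the fixed ball $B(x_0,R+\rho)$ of finite measure, and by Hölder's inequality and the hypothesis $\inf_x\mu(B(x,\rho))>0$ they are uniformly bounded in sup-norm (this is where the assumptions $p>1$ and the positivity of ball measures enter). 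The obstacle is that $A_\rho(f\chi_E)$ need not be equicontinuous — in fact it need not even be continuous, since $r\mapsto\mu(B(x,r))$ may jump — so the Arzel\`a-Ascoli theorem does not apply directly.

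To get around this I would replace the ball average by its Lipschitz analogue: with the tent weight $\phi_\rho(x,y)=\max\{0,1-d(x,y)/\rho\}$, set $\tilde A_\rho f(x)=\bigl(\int\phi_\rho(x,y)f(y)\,d\mu(y)\bigr)\big/\bigl(\int\phi_\rho(x,y)\,d\mu(y)\bigr)$, whose denominator is at least $\tfrac12\mu(B(x,\rho/2))\geq\tfrac12\inf_z\mu(B(z,\rho/2))>0$. Jensen, Fubini and doubling give the uniform bound $\|\tilde A_\rho f\|_p\leq c\|f\|_p$ with $c=c(\gamma,p)$, and writing the tent as a superposition of ball indicators, $\phi_\rho(x,y)=\int_0^1\chi_{B(x,\rho s)}(y)\,ds$, expresses $\tilde A_\rho f$ as a weighted average of the $A_{\rho s}f$ ($0<s<1$) and yields the comparison $\|\tilde A_\rho f-f\|_p^p\leq 2\gamma\int_0^1\|A_{\rho s}f-f\|_p^p\,ds$; thus hypothesis (1) transfers from $A$ to $\tilde A$. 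Because $\phi_\rho(\cdot,y)$ is $\rho^{-1}$-Lipschitz, the numerator of $\tilde A_\rho(f\chi_E)$ (of modulus at most $\|f\chi_E\|_1$) is uniformly Lipschitz while the denominator is bounded below by a positive constant and, on the relevant bounded region, above; hence $\{\tilde A_\rho(f\chi_E):f\in F\}$ is uniformly Lipschitz, uniformly bounded, and supported in $B(x_0,R+\rho)$, so by Arzel\`a-Ascoli it is relatively compact in $\conti(B(x_0,R+\rho))$ with the sup-norm, and therefore — the ball having finite measure — relatively compact in $\lp(X)$. A triangle inequality using $\|f-f\chi_E\|_p$ small (from (2)), $\|\tilde A_\rho f-f\|_p$ small (from (1) via the comparison) and the uniform $\lp$-boundedness of $\tilde A_\rho$ gives $\|f-\tilde A_\rho(f\chi_E)\|_p<\epsilon/2$ for all $f\in F$ once the auxiliary parameters are small enough, so finitely many $\epsilon/2$-balls covering the precompact set $\{\tilde A_\rho(f\chi_E):f\in F\}$ give finitely many $\epsilon$-balls covering $F$; letting $\epsilon\to0$ finishes. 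I expect the genuinely non-routine point to be precisely this passage to the Lipschitz mollifier $\tilde A_\rho$ and the comparison inequality that lets the hypothesis on $A_\rho$ be invoked; the rest — density of Lipschitz functions, the operator bounds, and the bookkeeping of the various small parameters and uniform Lipschitz/sup estimates — is routine but needs to be done with care.
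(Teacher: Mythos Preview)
The paper does not give its own proof of this theorem: it is quoted from G\'orka--Macios \cite{GM}, and the paper only remarks that their proof relies on the Hardy--Littlewood maximal inequality (which is why $p>1$ is assumed there). The paper's own contribution is the neighbouring Theorem~\ref{cpt.}, under the different hypothesis $\mu(B(x,r)\triangle B(y,r))\to0$, so the appropriate comparison is with that argument and with the cited maximal-function approach.

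For the \emph{necessity} direction your plan matches the paper's Proposition~\ref{approx.} almost exactly: uniform boundedness $\|A_rf\|_p\le\gamma^{1/p}\|f\|_p$ (Lemma~\ref{est.}), a finite net, and convergence $A_rf_i\to f_i$ for the net elements. The only difference is cosmetic: you use density of Lipschitz functions with bounded support to get $\|A_rg-g\|_p\to0$ directly, while the paper uses bounded functions with bounded support together with the Lebesgue differentiation theorem and dominated convergence.

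For the \emph{sufficiency} direction your approach is genuinely different from both the G\'orka--Macios route and the paper's own proof of Theorem~\ref{cpt.}. The paper (Propositions~\ref{equiconti.} and~\ref{av.rel.cpt.}) shows equicontinuity of $x\mapsto A_rf(x)$ \emph{via the symmetric-difference hypothesis}, then uses Vitali's covering theorem to cover $E$ up to small measure by balls on which this equicontinuity gives a finite $L^p$-net; the hypothesis $\inf_x\mu(B(x,r))>0$ alone is not enough for that argument, since $A_rf$ need not be continuous. Your device of replacing $A_\rho$ by the Lipschitz-kernel average $\tilde A_\rho$ and transferring hypothesis~(1) through the layer-cake identity $\phi_\rho(x,y)=\int_0^1\chi_{B(x,\rho s)}(y)\,ds$ is a clean way around this, and in fact avoids the maximal inequality entirely; as written it does not even use $p>1$.

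Two small points to tighten. First, the quotient $\tilde A_\rho(f\chi_E)$ is Lipschitz because \emph{both} numerator and denominator are Lipschitz on the relevant bounded region (the denominator $x\mapsto\int\phi_\rho(x,y)\,d\mu(y)$ is, with constant $\rho^{-1}\mu(B(x_0,R+2\rho))$); boundedness of the denominator alone is not enough, so say this explicitly. Second, the ball $B(x_0,R+\rho)$ need not be compact, so Arzel\`a--Ascoli does not apply verbatim. What saves you is that measure-doubling implies metric doubling, so bounded sets are totally bounded; then a uniformly bounded, uniformly Lipschitz family on a totally bounded set of finite measure is totally bounded in sup-norm (pick a finite $\eta$-net of points, discretise the value space), hence in $L^p$. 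State this step.
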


In the proof of the above theorem, the Hardy-Littlewood maximal inequation, refer to \cite[Theorem~2.2]{Hein}, was applied.
This inequation does not hold when $p = 1$,
 so it is assumed that $p > 1$ in the above theorem.
On the other hand, the average function $A_rf$ of $f \in \lp(X)$ can be estimated by $f$, see Lemma~\ref{est.}.
Combining this with the natural condition between the metric and measure of $X$,
 which is used to give a criterion for subsets of Banach function spaces to be compact in \cite{GR}, but without the condition that $p \neq 1$, we shall establish the following theorem.

\begin{thm}\label{cpt.}
Let $X$ be a doubling metric measure space and $F$ be a bounded subset of $\lp(X)$.
Suppose that for any $x \in X$ and any $r > 0$,
 $$\mu(B(x,r) \triangle B(y,r)) \to 0$$
 as $y \to x$.\footnote{For subsets $A, B \subset X$, let $A \triangle B = (A \setminus B) \cup (B \setminus A)$.
 This continuous property of the measure $\mu$ with respect to the metric $d$ is investigated in \cite{GG}.}
Then $F$ is relatively compact if and only if the following are satisfied.
\begin{enumerate}
 \item For every $\epsilon > 0$, there is $\delta > 0$ such that for each $f \in F$ and each $r \in (0,\delta)$, $\|A_rf - f\|_p < \epsilon$.
 \item For each $\epsilon > 0$, there exists a bounded set $E$ in $X$ such that $\|f\chi_{X \setminus E}\|_p < \epsilon$ for any $f \in F$.
\end{enumerate}
\end{thm}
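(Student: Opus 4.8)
The plan is to reduce everything to total boundedness: since $\lp(X)$ is complete, $F$ is relatively compact if and only if it is totally bounded. For \emph{necessity}, assume $F$ is totally bounded. Condition~(2) follows by covering $F$ with finitely many balls of radius $\epsilon/2$ about points $f_1,\dots,f_n\in F$: as $X$ is $\sigma$-finite (a countable union of balls of finite measure), dominated convergence gives $\|f_i\chi_{X\setminus B(x_0,m)}\|_p\to0$, so a single bounded ball $E$ works for all the $f_i$, and the triangle inequality then handles every $f\in F$. For condition~(1) I use that $\|A_rf-f\|_p\to0$ as $r\to0^+$ for each fixed $f\in\lp(X)$: this holds for Lipschitz $g$ with bounded support, since $|A_rg(x)-g(x)|\le Lr$ pointwise ($L$ the Lipschitz constant) while $A_rg-g$ is supported in a fixed set of finite measure, and such $g$ are dense in $\lp(X)$ while $\|A_r(f-g)\|_p\le C\|f-g\|_p$ uniformly in $r$ by Lemma~\ref{est.}. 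A finite-net argument then yields~(1); the measure-continuity hypothesis is not needed here.

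For \emph{sufficiency}, suppose $F$ is bounded, say $\sup_{f\in F}\|f\|_p\le M$, and that (1) and (2) hold. Fix $\epsilon>0$. By (2), after enlarging, choose a closed ball $E=B(x_0,R_0)$ with $\|f\chi_{X\setminus E}\|_p<\epsilon$ for all $f\in F$; by (1) fix $r>0$ with $\|A_rf-f\|_p<\epsilon$ for all $f\in F$. Let $E'=\{x\in X:\dist(x,E)\le r\}$. Since the doubling condition on $\mu$ forces $(X,d)$ to be a doubling metric space (balls can be covered by boundedly many half-sized balls, by comparing measures), every bounded subset of $X$, in particular $E'$, is totally bounded, and $\mu(E')<\infty$. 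Writing $g_f=f\chi_E$, the average $A_rg_f$ vanishes off $E'$, and by Lemma~\ref{est.},
$$\|f-A_rg_f\|_p\le\|f-A_rf\|_p+\|A_r(f-g_f)\|_p\le(1+C)\epsilon ,$$
so it suffices to prove that $\mathcal G=\{A_rg_f:f\in F\}$ is totally bounded in $\lp(X)$. Since open balls have positive finite measure, the doubling property yields $c>0$ with $\mu(B(x,r))\ge c$ for all $x\in E'$; hence by Hölder's inequality $\sup_{E'}|A_rg_f|\le K$ with $K$ independent of $f$, and every $A_rg_f$ lies in $\lp(X)$ (bounded and supported on a set of finite measure).

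The crux is the pointwise equicontinuity of $\mathcal G$ on $E'$. Expanding the difference and using $\mu(B(x,r)),\mu(B(y,r))\ge c$ for $x,y\in E'$ gives
$$|A_rg_f(x)-A_rg_f(y)|\le\frac{M}{c^{2}}\,\mu\bigl(B(x,r)\triangle B(y,r)\bigr)+\frac{1}{c}\int_{B(x,r)\triangle B(y,r)}|g_f|\,d\mu .$$
For $p>1$ the integral is at most $M\,\mu(B(x,r)\triangle B(y,r))^{1-1/p}$ by Hölder. For $p=1$ I first note that $\{g_f:f\in F\}$ is uniformly integrable: choosing $\rho\in(0,\delta)$ from (1) so that $\|A_\rho f-f\|_1$ is as small as desired uniformly in $f$, and using $\sup_E|A_\rho f|\le M/c_\rho$ (the same ball-measure lower bound with radius $\rho$), one gets $\int_A|g_f|\,d\mu\le\|A_\rho f-f\|_1+(M/c_\rho)\mu(A)$ for every measurable $A$, which is uniformly small once $\mu(A)$ is. In either case, together with the hypothesis $\mu(B(x,r)\triangle B(y,r))\to0$ as $y\to x$, this shows: for every $x\in E'$ and $\epsilon'>0$ there is $\eta(x)>0$ with $|A_rg_f(x)-A_rg_f(y)|<\epsilon'$ for all $f\in F$ whenever $d(x,y)<\eta(x)$.

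Uniformity of $\eta(x)$ in $x$ is not available, but it is unnecessary because $\mu(E')<\infty$. By Lindelöf, $E'$ is covered by countably many balls $B(x_k,\eta(x_k)/2)$; disjointifying gives a countable measurable partition $\{Q_k\}$ of $E'$ with $\sum_k\mu(Q_k)=\mu(E')$. Fix $N$ with $\sum_{k>N}\mu(Q_k)$ small. The assignment $A_rg_f\mapsto\sum_{k\le N}A_rg_f(x_k)\chi_{Q_k}$ changes each $A_rg_f$ by an arbitrarily small amount in $\lp(X)$ (the change is at most $\epsilon'$ on each $Q_k$ with $k\le N$ and at most $K$ on the remaining $Q_k$, of small total measure), and its image lies in the fixed finite-dimensional space $\operatorname{span}\{\chi_{Q_1},\dots,\chi_{Q_N}\}$ and is bounded there, hence totally bounded. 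Therefore $\mathcal G$ is totally bounded, and tracking the constants shows that $F$ is totally bounded, completing the proof. I expect this last step to be the main obstacle: converting the merely pointwise continuity of $y\mapsto\mu(B(x,r)\triangle B(y,r))$ into a genuine finite approximation of $\mathcal G$. The two devices that make it work are the finiteness of $\mu(E')$ — which replaces a finite subcover (unavailable, since $E'$ need not be compact) by a countable partition plus truncation — and, when $p=1$, condition~(1) itself, which forces the uniform integrability of $\{f\chi_E:f\in F\}$.
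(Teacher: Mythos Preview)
Your argument is correct and follows the same architecture as the paper's proof: reduce to total boundedness, handle necessity by a finite-net argument, and for sufficiency approximate each $f\in F$ by a truncated average and then show the family of averages is totally bounded via an equicontinuity argument that splits into the cases $p>1$ (H\"older) and $p=1$ (uniform integrability extracted from condition~(1) itself).

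The differences are in the implementation, not the strategy. First, the paper approximates $f$ by $(A_rf)\chi_E$, whereas you use $A_r(f\chi_E)$; your choice has the mild advantage that the support of the approximant is automatically contained in the bounded set $E'$, at the cost of one extra invocation of Lemma~\ref{est.} to control $\|A_r(f-f\chi_E)\|_p$. Second, for the covering step the paper appeals to Vitali's covering theorem on doubling spaces to produce \emph{finitely many} disjoint balls missing only a set of small measure; you instead observe that the doubling measure forces $(X,d)$ to be a doubling metric space, so bounded sets are totally bounded and hence Lindel\"of, and you pass from a countable cover to a finite one by truncating a disjointified partition using $\mu(E')<\infty$ and the uniform bound $\sup_{E'}|A_rg_f|\le K$. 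Your route is slightly more elementary (it avoids Vitali) but needs the extra truncation. Third, for necessity of~(1) the paper uses Lebesgue's differentiation theorem on doubling spaces applied to bounded functions with bounded support, while you use the density of Lipschitz functions with bounded support and the pointwise bound $|A_rg-g|\le Lr$; both are legitimate on a doubling space, and both couple with Lemma~\ref{est.} in the same way.
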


In the theory of infinite-dimensional topology, it plays an important role to recognize typical subspaces of the separable Hilbert space $\ell_2$ and the Hilbert cube $\Q$ among function spaces.
Under our assumption, the separability of $\lp(X)$ follows from the one of $X$, see \cite[Proposition~3.4.5]{Coh}.
Moreover, as is easily observed,
 when $X$ is infinite,
 $\lp(X)$ is infinite-dimensional.
In the following theorem, the latter part follows from the efforts by R.D.~Anderson \cite{Ande1} and M.I.~Kadec \cite{Kad}.

\begin{thm}\label{Lp}
Let $X$ be an infinite separable metric measure space.
Then $\lp(X)$ is an infinite-dimensional separable Banach space,
 so it is homeomorphic to $\ell_2$.
\end{thm}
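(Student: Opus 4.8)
The plan is to establish the two non-obvious assertions --- that $\lp(X)$ is separable and that it is infinite-dimensional --- and then to quote the topological classification of such spaces; recall that $\lp(X)$ being a Banach space was already recorded via \cite[Theorem~4.8]{Br}.

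For separability, I would first note that $X$ is $\sigma$-finite: being separable metric it is Lindel\"of, so the cover of $X$ by all open balls of radius $1$ has a countable subcover, and each such ball has finite measure, whence $X$ is a countable union of sets of finite measure. Since $X$ is separable, its Borel $\sigma$-algebra is generated by a countable base; and since $\mathcal{M}$ contains the Borel sets and $X$ is Borel-regular, a routine argument using the $\sigma$-finiteness shows that every set in $\mathcal{M}$ agrees, modulo a Borel null set, with a Borel set, so the measure algebra of $\mathcal{M}$ coincides with the (countably generated) measure algebra of the Borel $\sigma$-algebra. Then \cite[Proposition~3.4.5]{Coh} applies and $\lp(X)$ is separable.

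For infinite-dimensionality it suffices to produce pairwise disjoint balls $B(z_n,r_n)$, $n \in \N$, with $r_n > 0$. Each such $B(z_n,r_n)$ has positive and finite measure --- it contains the open ball of radius $r_n$ and lies in the open ball of radius $2r_n$ --- so $\chi_{B(z_n,r_n)} \in \lp(X)$, and these characteristic functions are linearly independent because their supports are pairwise disjoint of positive measure; thus $\lp(X)$ contains an infinite linearly independent set. To obtain such balls in the infinite metric space $X$, choose distinct points $x_1, x_2, \dots$ in $X$. If $\{x_n \mid n \in \N\}$ has no accumulation point in $X$, it is closed and discrete, so each $x_n$ lies at positive distance from the closure of $\{x_k \mid k \neq n\}$, and balls of sufficiently small radii about the $x_n$ are pairwise disjoint. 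Otherwise, passing to a subsequence we may assume $x_n \to x$ with $x_n \neq x$ and with $d(x_n,x)$ decreasing rapidly enough that the balls $B(x_n, \tfrac13 d(x_n,x))$ are pairwise disjoint by the triangle inequality. In either case $\lp(X)$ is infinite-dimensional.

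Finally, being an infinite-dimensional separable Banach space, $\lp(X)$ is homeomorphic to $\ell_2$ by the theorem of Anderson \cite{Ande1} and Kadec \cite{Kad}. The only step that amounts to more than verifying the hypotheses of cited results is the infinite-dimensionality, and there the sole (minor) obstacle is the elementary point-set argument separating the balls.
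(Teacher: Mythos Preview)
Your proof is correct and follows exactly the route sketched in the paper: separability from \cite[Proposition~3.4.5]{Coh} (the paper merely cites this, and you correctly supply the $\sigma$-finiteness and Borel-regularity check needed to apply it), infinite-dimensionality by exhibiting disjoint balls of positive measure (what the paper dismisses as ``easily observed''), and the homeomorphism to $\ell_2$ via Anderson--Kadec. The paper gives no more than these three references in the paragraph preceding the theorem, so your write-up simply fleshes out the same argument.
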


Consider the topologies on subspaces of $\lp(X)$.
The following subspace of $\ell_2$,
 $$\ell_2^f = \{(x(n)) \in \ell_2 \mid x(n) = 0 \text{ for almost all } n \in \N\},$$
 is detected among several function spaces as a factor.
For example, the author \cite{Kos14} generalized R.~Cauty's result \cite{Ca} and recognized the topological type of the subspace
 $$\uc(X) = \{f \in \lp(X) \mid f \text{ is uniformly continuous}\}$$ as follows:\footnote{Let $P$ be a property of functions.
 A function $g \in \{f \in \lp(X) \mid f \text{ satisfies } P\}$ if there is $f : X \to \R$ such that $f$ satisfies $P$ and $g = f$ almost everywhere.}

\begin{thm}
Let $X$ be a separable and locally compact metric measure space such that $\{x \in X \mid \mu(\{x\}) \neq 0\}$ is not dense in $X$.
Then $\uc(X)$ is homeomorphic to $(\ell_2^f)^\N$.
\end{thm}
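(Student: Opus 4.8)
The plan is to recognise $\uc(X)$ as the canonical \emph{absorbing set}, in the category of absolute retracts, for the class of separable absolute $F_{\sigma\delta}$-spaces --- and this absorbing set is precisely $(\ell_2^f)^\N$, so the theorem will follow from the uniqueness of absorbing sets (the Bestvina--Mogilski machinery underlying \cite{Ca} and \cite{Kos14}). Note first that $X$ is infinite (a finite metric space is discrete, so every point would be an atom, contradicting the hypothesis), hence $\lp(X)\cong\ell_2$ by Theorem~\ref{Lp}; we regard $\uc(X)$ as a subset of $\lp(X)$. It then suffices to verify: (i) $\uc(X)$ is a separable absolute retract; (ii) $\uc(X)$ is an absolute $F_{\sigma\delta}$-space; (iii) $\uc(X)$ is strongly universal for the separable absolute $F_{\sigma\delta}$-spaces; and (iv) $\uc(X)$ is positioned in the $\ell_2$-manifold $\lp(X)$ so as to be an absorbing set (the $Z$-set/skeletoid bookkeeping). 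The uniqueness of absorbing sets then gives $\uc(X)\cong(\ell_2^f)^\N$.

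Items (i) and (ii) are the routine part. Separability of $\uc(X)$ is inherited from $\lp(X)$, hence from $X$ (see the proof of Theorem~\ref{Lp}). Since $\uc(X)$ is a linear, hence convex, subset of the Banach space $\lp(X)$ and --- once (ii) is known --- an absolute $F_{\sigma\delta}$-set, it is an absolute retract. For (ii), express uniform continuity of a representative of $f$ as: for each $n\in\N$ there is $m\in\N$ with the $(\mu\times\mu)$-essential oscillation of $f$ at scale $1/m$ at most $1/n$. For fixed $n,m$ the resulting set $C_{n,m}\subset\lp(X)$ is convex, and closed by a Fatou-type argument along an almost everywhere convergent subsequence; that $\bigcap_n\bigcup_m C_{n,m}$ really equals $\uc(X)$ uses the full support of $\mu$. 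Thus $\uc(X)$ is an absolute $F_{\sigma\delta}$-space, and it is genuinely not $F_\sigma$ --- the moduli of continuity admit no countable cofinal family because, by the non-density of the atoms, $X$ is not uniformly discrete --- so $(\ell_2^f)^\N$, and not a lower absorbing set, is the correct target.

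For the positioning (iv): each $C_{n,m}$ is a closed, convex, proper subset of $\lp(X)\cong\ell_2$ with empty interior (again by non-density of the atoms, every ball of $\lp(X)$ contains functions of arbitrarily large oscillation at every scale). Combining this with density of $\uc(X)$ in $\lp(X)$ --- which holds because $X$ is locally compact, so the continuous functions with compact support are uniformly continuous, lie in $\uc(X)$, and are dense in $\lp(X)$ --- and with $\uc(X)$ being an absolute retract, I would verify the skeletoid/$Z_\sigma$ requirement in the definition of an absorbing set; this is routine once the explicit representation $\uc(X)=\bigcap_n\bigcup_m C_{n,m}$ is in hand.

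The crux, and the step I expect to be the real obstacle, is strong universality (iii). Given a closed pair $(A,B)$ with $A$ a separable absolute $F_{\sigma\delta}$-space, $B$ closed in $A$, a map $f\colon A\to\uc(X)$ whose restriction to $B$ is a $Z$-embedding, and an open cover $\mathcal{U}$ of $\uc(X)$, one must produce a $Z$-embedding $g\colon A\to\uc(X)$ that is $\mathcal{U}$-close to $f$, agrees with $f$ on $B$, and carries $A\setminus B$ off $f(B)$. Here the hypotheses are used decisively: since $\{x\in X\mid\mu(\{x\})\neq 0\}$ is not dense, fix a non-empty open $U\subset X$ of finite measure on which $\mu$ is non-atomic, and use local compactness of $X$ to manufacture inside $U$ an abundant family of uniformly continuous functions with supports shrinking into $U$ (pairwise disjoint, or suitably nested) and $\lp$-norms tending to $0$; these furnish a closed embedding $h\colon A\to\uc(X)$ whose image lies in a $\sigma$-compact, indeed $(\ell_2^f)^\N$-like, subset ``linearly independent'' of $f(A)$. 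Taking a Urysohn map $\varphi\colon A\to[0,1]$ with $\varphi^{-1}(0)=B$ and, after scaling $h$ to stay $\mathcal{U}$-small, setting $g(a)=f(a)+\varphi(a)h(a)$ gives a map into $\uc(X)$ equal to $f$ on $B$; injectivity on $A\setminus B$ and the pushing of $g(A\setminus B)$ off $f(B)$ follow from the independence of the ``$U$-part'' from the ``$f$-part'', a general-position adjustment of $h$, and the $Z$-embedding hypothesis on $f|_B$. The hard points are: ensuring every superposition of infinitely many bumps genuinely lands in $\uc(X)$ with controlled norm --- i.e.\ that the bumps combine while keeping a common modulus of continuity and $\lp$-summability, which is exactly where the non-atomic set $U$ and the local compactness are indispensable --- and carrying the construction out uniformly over the $F_{\sigma\delta}$-domain $A$, which forces a level-by-level argument along a defining expression $A=\bigcap_n\bigcup_m A_{n,m}$ together with a splicing of partial embeddings, the technically heavy core of every strong-universality proof. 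With (i)--(iv) in place, the Bestvina--Mogilski characterisation of $(\ell_2^f)^\N$ as the absorbing absolute retract for separable absolute $F_{\sigma\delta}$-spaces yields $\uc(X)\cong(\ell_2^f)^\N$.
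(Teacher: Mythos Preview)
The paper does not contain a proof of this theorem. It is stated in the introduction as a previously established result, attributed to \cite{Kos14} (the author's earlier paper generalising Cauty's work \cite{Ca}); no argument is given in the present paper beyond the citation. Consequently there is no ``paper's own proof'' to compare your proposal against.

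That said, your outline is broadly the right shape for how results of this type are actually proved: the absorbing-set machinery (in the sense of Bestvina--Mogilski, as used in \cite{Ca} and \cite{Kos14}) is indeed the standard route to identifying a convex $F_{\sigma\delta}$ subset of $\ell_2$ with $(\ell_2^f)^\N$. Your identification of the ingredients --- convexity giving the AR property, an explicit $F_{\sigma\delta}$ presentation via modulus-of-continuity sets, density via compactly supported continuous functions, and a strong-universality step exploiting a non-atomic open set --- matches the architecture one would expect. The honest caveat is that what you have written is a programme, not a proof: the strong-universality step you flag as ``the real obstacle'' is genuinely the bulk of the work in \cite{Kos14}, and your sketch of it (bump functions in $U$, the map $g=f+\varphi h$, a ``general-position adjustment'') does not yet pin down the delicate bookkeeping needed to make $g$ a $Z$-embedding uniformly over an arbitrary $F_{\sigma\delta}$ domain. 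If you intend to supply a self-contained proof rather than cite \cite{Kos14}, that step needs to be written out in full.
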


Let $\lip(X)$ be the subspace in $\lp(X)$ consisting of lipschitz maps.
In this paper, we shall study the following subspace:
 $$\lip_b(X) = \{f \in \lip(X) \mid f \text{ has a bounded support}\}.$$
As an application of Theorem~\ref{cpt.}, we show the following:

\begin{cor}\label{lip.}
Let $X$ be a metric measure space satisfying the following conditions:
 {\rm (1)} $X$ is non-degenerate and separable;
 {\rm (2)} $X$ is doubling;
 {\rm (3)} for each point $x \in X$, the function
 $$(0,\infty) \ni r \mapsto \mu(B(x,r)) \in (0,\infty)$$
 is continuous.
Then the pair $(\lp(X),\lip_b(X))$ is homeomorphic to $(\ell_2 \times \Q,\ell_2^f \times \Q)$.\footnote{For spaces $Y_1 \supset Y_2$ and $Z_1 \supset Z_2$, the pair $(Y_1,Y_2)$ is homeomorphic to $(Z_1,Z_2)$ if there is a homeomorphism $f : Y_1 \to Z_1$ such that $f(Y_2) = Z_2$.}
\end{cor}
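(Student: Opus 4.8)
The plan is to realize $\lip_b(X)$ as a $\sigma$-compact absorbing set (a ``cap set'') in $\lp(X)$ and then to invoke the uniqueness theorem for such sets. First, by non-degeneracy and condition~(3) the space $X$ has no isolated points --- an isolated point would be an atom of $\mu$, and an atom at $x$ forces every sphere through $x$ to have positive measure, contradicting (3) at any other point --- so $X$ is infinite. Hence, by condition~(1) and Theorem~\ref{Lp}, $\lp(X)$ is homeomorphic to $\ell_2$, and therefore $\lp(X)\cong\ell_2\cong\ell_2\times\Q$. Since $\ell_2^f\times\Q$ is a $\sigma$-compact absorbing set in $\ell_2\times\Q$ --- it is $\sigma$-compact, the countable union of the $Z$-sets $\R^n\times\Q$, homotopy dense (as $\ell_2^f$ is homotopy dense in $\ell_2$), and strongly universal for compacta --- it suffices to prove that $\lip_b(X)$ is a $\sigma$-compact absorbing set in $\lp(X)$; the uniqueness theorem then yields a homeomorphism $\lp(X)\to\ell_2\times\Q$ taking $\lip_b(X)$ onto $\ell_2^f\times\Q$.

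Theorem~\ref{cpt.} is what gives the $\sigma$-compactness. Fix $x_0\in X$ and put, for $n\in\N$,
\[
L_n=\bigl\{f\in\lip(X)\ \bigm|\ \operatorname{Lip}(f)\le n,\ \|f\|_\infty\le n,\ \supp f\subseteq B(x_0,n)\bigr\};
\]
then $\lip_b(X)=\bigcup_{n\in\N}L_n$, since, $X$ being doubling, a lipschitz map with bounded support has totally bounded support and is hence bounded. The space $X$ satisfies the hypotheses of Theorem~\ref{cpt.}: it is doubling, and (3) gives $\mu(B(x,r)\triangle B(y,r))\le\mu(B(x,r+\eta))-\mu(B(x,r-\eta))\to0$ as $\eta=d(x,y)\to0$. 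Each $L_n$ is bounded in $\lp(X)$; it satisfies condition~(2) of Theorem~\ref{cpt.} trivially with $E=B(x_0,n)$; and it satisfies condition~(1) because $|A_rf(x)-f(x)|\le\operatorname{Lip}(f)\cdot r$ pointwise while $A_rf-f$ is supported in the fixed ball $B(x_0,n+1)$ for $r<1$, so $\|A_rf-f\|_p\le c_n r$ for all $f\in L_n$, with $c_n$ depending only on $n$. Hence $L_n$ is relatively compact, and a routine Arzel\`a--Ascoli argument --- using that a uniform limit on $B(x_0,n)$ of maps which are $n$-lipschitz and vanish off $B(x_0,n)$ is again $n$-lipschitz on all of $X$ --- shows $L_n$ is closed, hence compact. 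So $\lip_b(X)$ is $\sigma$-compact. Each $L_n$ is convex, and for large $n$ infinite-dimensional (an accumulation point of $X$ supports infinitely many disjointly supported lipschitz bumps of arbitrarily small norm), hence homeomorphic to $\Q$ by Keller's theorem; so $\lip_b(X)$ is an increasing union of Hilbert cubes, each a $Z$-set in $\lp(X)\cong\ell_2$, and therefore $\lip_b(X)$ is a $Z_\sigma$-set.

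It remains to check that $\lip_b(X)$ is homotopy dense in $\lp(X)$ and strongly universal for compacta. Homotopy density is easy: $\lip_b(X)$ is a \emph{dense linear subspace} of the Banach space $\lp(X)$ --- continuous maps with bounded support are $\lp$-dense by Borel-regularity and Urysohn's lemma, and they are uniform limits of lipschitz maps on bounded sets by inf-convolution --- and a dense convex subset of a Banach space is homotopy dense. For strong universality one must produce, from a compactum $K$, a closed set $D\subseteq K$, a map $f\colon K\to\lp(X)$ whose restriction to $D$ is a $Z$-embedding into $\lip_b(X)$, and an $\epsilon>0$, a $Z$-embedding $g\colon K\to\lip_b(X)$ with $\|g-f\|<\epsilon$ and $g|_D=f|_D$. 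One first moves $f$ into $\lip_b(X)$ relative to $D$ by composing with the homotopy-density homotopy reparametrized by $\min(\delta,\dist(\cdot,D))$ --- the convexity of $\lip_b(X)$ keeps the correction inside it --- and then perturbs the outcome to an embedding by adding a small $\lip_b(X)$-valued map vanishing on $D$ and assembled from the disjointly supported lipschitz bumps found above: these bumps span an infinite-dimensional compact cube in $\lip_b(X)$ whereas the obstruction sets arising in the injectivity argument are $Z$-sets, so a general-position argument succeeds, and the resulting map, having compact image, is a $Z$-embedding.

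I expect this last step --- carrying out the strong-universality embeddings inside $\lip_b(X)$ under the constraint on $D$ --- to be the principal obstacle. The linear structure of $\lip_b(X)$ is what keeps the approximating maps inside it, and the supply of independent lipschitz bump functions (guaranteed by $X$ being non-degenerate, doubling, and, via (3), without isolated points) is what drives the injective perturbation; verifying that these bumps really provide enough ``room'' for general position is the delicate part. With homotopy density and strong universality established, the classification of $\sigma$-compact absorbing sets --- the machinery already used in \cite{Kos14} --- gives the ambient homeomorphism asserted by the corollary.
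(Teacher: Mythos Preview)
Your approach is correct but takes a different, more laborious route than the paper. The paper invokes the Curtis--Dobrowolski--Mogilski theorem (Theorem~\ref{char.cvx.}): once one knows that $\lip_b(X)$ is a $\sigma$-compact \emph{convex} set whose closure is $\lp(X)$ (an AR, not locally compact) and that it contains an infinite-dimensional compact convex set, the pair $(\lp(X),\lip_b(X))\cong(\ell_2\times\Q,\ell_2^f\times\Q)$ follows immediately. Thus the paper never touches strong universality or $Z_\sigma$-absorption directly---those are packaged inside the CDM theorem, which is tailored precisely to $\sigma$-compact convex sets in linear spaces. Your ``principal obstacle'' is therefore one you need not face: convexity of $\lip_b(X)$ is what buys you the shortcut. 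On one point your argument is actually cleaner than the paper's: you observe that each $L_n$ is an infinite-dimensional compact convex set and invoke Keller's theorem to get $L_n\cong\Q$, whereas the paper proves this via Toru\'nczyk's characterization (Theorem~\ref{char.Q}) and a bespoke disjoint-approximation lemma (Lemma~\ref{dcp}). Your definition of $L_n$ with an $\|\cdot\|_\infty$ bound rather than an $\|\cdot\|_p$ bound is an inessential variation; both exhaust $\lip_b(X)$ and both are compact by the same application of Theorem~\ref{cpt.}.
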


\section{Proof of Theorem~\ref{cpt.}}

For any function $f, g \in \lp(X)$, any point $x \in X$, and any positive number $r > 0$, observe that
\begin{align*}
 A_rf(x) - A_rg(x) &= \frac{1}{\mu(B(x,r))}\int_{B(x,r)} f(y) d\mu(y) - \frac{1}{\mu(B(x,r))}\int_{B(x,r)} g(y) d\mu(y)\\
 &= \frac{1}{\mu(B(x,r))}\int_{B(x,r)} (f(y) - g(y)) d\mu(y) = A_r(f(x) - g(x)).
\end{align*}
For $p \in [1,\infty)$, denote the conjugate of $p$ by $q$,
 that is, $1/p + 1/q = 1$.
Due to H\"{o}lder's inequality,
\begin{align*}
 |A_rf(x)| &= \bigg|\frac{1}{\mu(B(x,r))}\int_{B(x,r)} f(y) d\mu(y)\bigg| \leq \frac{1}{\mu(B(x,r))}\int_{B(x,r)} |f(y)| d\mu(y)\\
 &\leq \frac{1}{\mu(B(x,r))}\bigg(\int_{B(x,r)} |f(y)|^p d\mu(y)\bigg)^{1/p}\bigg(\int_{B(x,r)} d\mu(x)\bigg)^{1/q}\\
 &= \frac{1}{\mu(B(x,r))}\bigg(\int_X |f\chi_{B(x,r)}(y)|^p d\mu(y)\bigg)^{1/p}(\mu(B(x,r)))^{1/q}\\
 &= \|f\chi_{B(x,r)}\|_p(\mu(B(x,r)))^{-1/p}.
\end{align*}
Average functions can be uniformly bounded as follows:

\begin{lem}\label{est.}
Let $X$ be a doubling metric measure space with the doubling constant $\gamma$ and $f \in \lp(X)$.
For each $r > 0$, $\|A_rf\|_p \leq \gamma^{1/p}\|f\|_p$.
\end{lem}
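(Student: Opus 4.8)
The plan is to estimate $\|A_rf\|_p^p = \int_X |A_rf(x)|^p\, d\mu(x)$ directly, using the pointwise bound on $|A_rf(x)|$ computed just above the lemma, namely $|A_rf(x)| \leq \|f\chi_{B(x,r)}\|_p(\mu(B(x,r)))^{-1/p}$. Raising to the $p$-th power gives $|A_rf(x)|^p \leq (\mu(B(x,r)))^{-1}\int_{B(x,r)} |f(y)|^p\, d\mu(y)$, so
\begin{align*}
 \|A_rf\|_p^p &\leq \int_X \frac{1}{\mu(B(x,r))}\bigg(\int_{B(x,r)} |f(y)|^p\, d\mu(y)\bigg) d\mu(x).
\end{align*}
The inner integral can be rewritten as $\int_X |f(y)|^p \chi_{B(x,r)}(y)\, d\mu(y)$, and the key observation is that $y \in B(x,r)$ is equivalent to $x \in B(y,r)$ since $d$ is symmetric; hence $\chi_{B(x,r)}(y) = \chi_{B(y,r)}(x)$.

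First I would apply the Tonelli–Fubini theorem to interchange the order of integration (everything is nonnegative and $\mathcal{M}$-measurable, so this is legitimate), obtaining
\begin{align*}
 \|A_rf\|_p^p &\leq \int_X |f(y)|^p \bigg(\int_{B(y,r)} \frac{1}{\mu(B(x,r))}\, d\mu(x)\bigg) d\mu(y).
\end{align*}
Then I would bound the inner integral. For $x \in B(y,r)$ we have $B(y,r) \subset B(x,2r)$ by the triangle inequality, so the doubling condition yields $\mu(B(y,r)) \leq \mu(B(x,2r)) \leq \gamma\,\mu(B(x,r))$, whence $1/\mu(B(x,r)) \leq \gamma/\mu(B(y,r))$. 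Therefore
\begin{align*}
 \int_{B(y,r)} \frac{1}{\mu(B(x,r))}\, d\mu(x) &\leq \frac{\gamma}{\mu(B(y,r))}\int_{B(y,r)} d\mu(x) = \gamma.
\end{align*}
Substituting back gives $\|A_rf\|_p^p \leq \gamma \int_X |f(y)|^p\, d\mu(y) = \gamma\|f\|_p^p$, and taking $p$-th roots finishes the proof.

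The only genuine subtlety is making sure the swap of symmetry — $y \in B(x,r) \iff x \in B(y,r)$ — and the Fubini step are stated carefully; everything else is a one-line application of the doubling inequality to the pair of concentric balls $B(y,r) \subset B(x,2r)$. I expect the measurability hypotheses needed for Tonelli to be the only place where one must invoke the standing assumptions on $X$ (Borel-regularity and positivity/finiteness of measures of balls guarantee the integrand $(x,y) \mapsto |f(y)|^p\chi_{B(x,r)}(y)/\mu(B(x,r))$ is a well-defined nonnegative measurable function on the product), but this is routine rather than an obstacle.
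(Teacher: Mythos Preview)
Your proof is correct and is essentially identical to the paper's own argument: both raise the pointwise H\"older bound to the $p$-th power, swap the order of integration via Fubini--Tonelli using $\chi_{B(x,r)}(y)=\chi_{B(y,r)}(x)$, and then bound the inner integral by $\gamma$ via the doubling inequality $\mu(B(y,r))\leq\mu(B(x,2r))\leq\gamma\,\mu(B(x,r))$ for $x\in B(y,r)$. The only cosmetic difference is that the paper states the doubling comparison before performing the Fubini step rather than after.
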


\begin{proof}
Since $X$ is doubling,
 for any $z, w \in X$, if $z \in B(w,r)$,
 then
 $$\mu(B(w,r)) \leq \mu(B(z,2r)) \leq \gamma\mu(B(z,r)),$$
 and hence $1/\mu(B(z,r)) \leq \gamma/\mu(B(w,r))$.
Applying the Fubini-Tonelli theorem, we have
\begin{align*}
 \|A_rf\|_p^p &= \int_X |A_rf(x)|^p d\mu(x) \leq \int_X \bigg(\bigg(\int_X |f\chi_{B(x,r)}(y)|^p d\mu(y)\bigg)^{1/p}(\mu(B(x,r)))^{-1/p}\bigg)^p d\mu(x)\\
 &= \int_X \frac{1}{\mu(B(x,r))} \bigg(\int_X |f\chi_{B(x,r)}(y)|^p d\mu(y)\bigg) d\mu(x)\\
 &= \int_X |f(y)|^p \bigg(\int_X \frac{\chi_{B(y,r)}(x)}{\mu(B(x,r))} d\mu(x)\bigg) d\mu(y)\\
 &\leq \int_X |f(y)|^p \bigg(\int_X \frac{\gamma\chi_{B(y,r)}(x)}{\mu(B(y,r))} d\mu(x)\bigg) d\mu(y)\\
 &= \int_X \frac{|f(y)|^p\gamma\mu(B(y,r))}{\mu(B(y,r))} d\mu(y) = \gamma\|f\|_p^p.
\end{align*}
Consequently, $\|A_rf\|_p \leq \gamma^{1/p}\|f\|_p$.
\end{proof}

According to the above lemma, the condition~(1) of Theorem~\ref{cpt.} is shown to be necessary for a subset in $\lp(X)$ being relatively compact.

\begin{prop}\label{approx.}
Let $X$ be a doubling metric measure space.
If a subset $F$ is relatively compact in $\lp(X)$,
 then the following holds.
\begin{itemize}
 \item For any $\epsilon > 0$, there is $\delta > 0$ such that for each $f \in F$ and each $r \in (0,\delta)$, $\|A_rf - f\|_p < \epsilon$.
\end{itemize}
\end{prop}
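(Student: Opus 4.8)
The plan is to reduce the assertion — which is uniform over all $f \in F$ — first to a statement about one function at a time, by exploiting the total boundedness of $F$ together with the uniform operator bound of Lemma~\ref{est.}, and then to settle the one‑function statement by a density argument.

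Since $\overline{F}$ is compact, $F$ is totally bounded, so for $\eta := \epsilon/\big(2(\gamma^{1/p}+1)\big)$, where $\gamma \ge 1$ is the doubling constant of $X$, there exist $g_1,\dots,g_k \in \lp(X)$ such that every $f \in F$ satisfies $\|f-g_i\|_p < \eta$ for some $i$. As $A_r$ is linear and $\|A_rh\|_p \le \gamma^{1/p}\|h\|_p$ for all $h \in \lp(X)$ and all $r>0$ by Lemma~\ref{est.}, for such $f$ and $g_i$ we have
$$\|A_rf - f\|_p \le \|A_r(f-g_i)\|_p + \|A_rg_i - g_i\|_p + \|g_i - f\|_p \le (\gamma^{1/p}+1)\eta + \|A_rg_i - g_i\|_p = \frac{\epsilon}{2} + \|A_rg_i - g_i\|_p.$$
Thus it is enough to find $\delta>0$ so that $\|A_rg_i - g_i\|_p < \epsilon/2$ for every $i$ and every $r\in(0,\delta)$, and for that it suffices to prove: for each fixed $g \in \lp(X)$, $\|A_rg - g\|_p \to 0$ as $r \to 0^+$.

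I would prove this last claim first on the dense subspace $\lip_b(X) \subset \lp(X)$. Let $g \in \lip_b(X)$ have Lipschitz constant $L$ and $\supp g \subset B(x_0,R)$. If $0<r\le 1$ and $\dist(x,B(x_0,R))>r$, then $B(x,r)$ misses $B(x_0,R)$, so $g$ vanishes on $B(x,r)$ and at $x$, whence $A_rg(x)=g(x)=0$; consequently $A_rg-g$ is supported in $B(x_0,R+1)$, a set of finite measure. On that set,
$$|A_rg(x)-g(x)| = \bigg|\frac{1}{\mu(B(x,r))}\int_{B(x,r)}\big(g(y)-g(x)\big)\,d\mu(y)\bigg| \le Lr,$$
so $\|A_rg-g\|_p \le Lr\,\mu(B(x_0,R+1))^{1/p} \to 0$ as $r\to 0^+$. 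For an arbitrary $g \in \lp(X)$ and $\eta'>0$, choose $h \in \lip_b(X)$ with $\|g-h\|_p<\eta'$; then, using Lemma~\ref{est.} once more,
$$\|A_rg-g\|_p \le \|A_r(g-h)\|_p + \|A_rh-h\|_p + \|h-g\|_p \le (\gamma^{1/p}+1)\eta' + \|A_rh-h\|_p,$$
so $\limsup_{r\to 0^+}\|A_rg-g\|_p \le (\gamma^{1/p}+1)\eta'$, and letting $\eta' \to 0$ finishes the one‑function case; feeding this back into the displayed estimate of the previous paragraph completes the proof.

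The only ingredient beyond routine three‑$\epsilon$ bookkeeping and Lemma~\ref{est.} is the density of $\lip_b(X)$ in $\lp(X)$, and this is where I expect to have to be slightly careful. It follows from the density of simple functions together with the standing hypotheses on $X$: by Borel‑regularity one reduces to approximating $\chi_E$ for a Borel set $E$ of finite measure, which may be assumed bounded since $\mu(E \cap B(x_0,n)) \to \mu(E)$; and since $X = \bigcup_n B(x_0,n)$ with each $B(x_0,n)$ of finite measure, $\mu$ is outer regular, so $E$ can be squeezed between a closed set $C$ and a bounded open set $U$ with $\mu(U \setminus C)$ arbitrarily small, and $\chi_E$ is then approximated in $\lp(X)$ by a Lipschitz function of $\dist(\cdot,C)$ supported in $U$.
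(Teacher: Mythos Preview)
Your argument is correct and shares the paper's overall architecture: use total boundedness of $F$ together with the uniform operator bound $\|A_r h\|_p \le \gamma^{1/p}\|h\|_p$ (Lemma~\ref{est.}) to reduce to finitely many functions, and then establish $\|A_r g - g\|_p \to 0$ for a single $g$. The difference lies in this last step. The paper takes the $g_i$ to be \emph{bounded} functions with bounded support, invokes the Lebesgue differentiation theorem on doubling spaces \cite[Theorem~1.8]{Hein} to get $A_r g_i \to g_i$ almost everywhere, and then applies dominated convergence. You instead take the $g_i$ (after a further approximation) in $\lip_b(X)$ and use the pointwise bound $|A_r g(x) - g(x)| \le Lr$, which makes the convergence immediate without any differentiation theorem. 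Your route is more elementary in that it avoids citing a substantial theorem, at the price of needing the density of $\lip_b(X)$ in $\lp(X)$.

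On that density: note that the paper only proves it later (Proposition~\ref{dense}) and under the extra hypothesis that $r \mapsto \mu(B(x,r))$ is continuous, which is \emph{not} assumed in Proposition~\ref{approx.}. Your independent sketch, however, does not need that hypothesis and is essentially correct. One small simplification: you do not actually need outer regularity. Once you have a closed $C \subset E$ with $\mu(E \setminus C)$ small and $C$ bounded, the function $\phi(x) = \max\{0,\,1 - \dist(x,C)/\delta\}$ is $(1/\delta)$-Lipschitz with $\supp\phi \subset \{x : \dist(x,C) \le \delta\}$, and $\|\phi - \chi_E\|_p^p \le \mu(E \setminus C) + \mu(\{x : 0 < \dist(x,C) \le \delta\})$; the second term tends to $0$ as $\delta \to 0$ by continuity from above, since the $\delta$-neighborhoods of $C$ are bounded and hence of finite measure. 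Inner regularity by closed sets, which holds for any Borel measure on a metric space that is finite on bounded sets, is all you need.
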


\begin{proof}
Since $F$ is relatively compact,
 and hence it is totally bounded,
 we can find finitely many functions $f_i \in \lp(X)$, $1 \leq i \leq n$, so that for each $f \in F$, there exists $i \in \{1, \cdots, n\}$ such that $\|f - f_i\|_p \leq \epsilon\gamma^{-1/p}/3$,
 where $\gamma$ is the doubling constant of $X$.
Here we may assume that $|f_i| \leq c_i$ for some $c_i > 0$ and $f_i$ has a bounded support $E_i \subset X$.
Using Lemma~\ref{est.}, we get
 $$\|A_rf - A_rf_i\|_p = \|A_r(f - f_i)\|_p \leq \gamma^{1/p}\|f - f_i\|_p < \epsilon/3.$$
By Lebesgue's differentiation theorem, see \cite[Theorem~1.8]{Hein},
 for each $i \in \{1, \cdots, n\}$, $A_rf_i \to f_i$ as $r \to 0$ almost everywhere.
Set $E = \bigcup_{x \in \bigcup_{i = 1}^n E_i} B(x,1)$,
 that is a bounded subset of $X$,
 and hence $\mu(E) < \infty$.
Fixing any $r \in (0,1)$, observe that $f_i(x) = A_rf_i(x) = 0$ for all $x \in X \setminus E$,
 and
 \begin{align*}
  |A_rf_i(x)| &= \bigg|\frac{1}{\mu(B(x,r))}\int_{B(x,r)} f_i(y) d\mu(y)\bigg| \leq \frac{1}{\mu(B(x,r))}\int_{B(x,r)} |f_i(y)| d\mu(y)\\
  &\leq \frac{1}{\mu(B(x,r))}\int_{B(x,r)} c_i d\mu(y) = \frac{c_i\mu(B(x,r))}{\mu(B(x,r))} = c_i
 \end{align*}
 for all $x \in E$.
Applying the dominated convergence theorem, we can obtain that $\|A_rf_i - f_i\|_p \to 0$ as $r \to 0$.
So there is $\delta_i \in (0,1)$ such that if $r < \delta_i$,
 then $\|A_rf_i - f_i\|_p < \epsilon/3$.
Let $\delta = \min\{\delta_i \mid 1 \leq i \leq n\}$,
 so for any $0 < r < \delta$,
 $$\|A_rf - f\|_p \leq \|A_rf - A_rf_i\|_p + \|A_rf_i - f_i\|_p + \|f_i - f\|_p < \epsilon.$$
Thus the proof is completed.
\end{proof}

Measures of closed balls centered at points in a bounded subset with the same radius of a doubling metric measure space are lower bounded.

\begin{lem}\label{ball}
If $X$ is doubling and $E \subset X$ is bounded,
 then for each $r > 0$,
 $$\inf\{\mu(B(x,r)) \mid x \in E\} > 0.$$
\end{lem}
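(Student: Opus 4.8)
The plan is to compare every ball $B(x,r)$ with $x \in E$ against a single fixed ball centered at a chosen point of $E$: boundedness of $E$ controls how far the centers can move, and the doubling condition absorbs the resulting enlargement of the radius.

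If $E = \emptyset$ the infimum is $+\infty$ and there is nothing to prove, so I would assume $E \neq \emptyset$ and fix a point $x_0 \in E$. Set $D = \sup\{d(x,x_0) \mid x \in E\}$, which is finite since $E$ is bounded (and possibly $0$, which causes no trouble). For every $x \in E$ and every $y$ with $d(y,x_0) \leq r$ one has $d(y,x) \leq d(y,x_0) + d(x_0,x) \leq r + D$, hence $B(x_0,r) \subset B(x,r+D)$ and therefore $\mu(B(x_0,r)) \leq \mu(B(x,r+D))$.

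Next I would choose $k \in \N$ large enough that $2^k r \geq r + D$ (such $k$ exists because $D/r < \infty$); note that $k$ depends only on $r$ and on $E$, not on the particular $x$. Then $B(x,r+D) \subset B(x,2^k r)$, and applying the doubling inequality $k$ times gives $\mu(B(x,2^k r)) \leq \gamma^k\mu(B(x,r))$, where $\gamma$ is the doubling constant. Chaining these estimates yields $\mu(B(x,r)) \geq \gamma^{-k}\mu(B(x_0,r))$ for all $x \in E$. Since the closed ball $B(x_0,r)$ contains the open ball of radius $r$ about $x_0$, the standing hypothesis on $X$ gives $\mu(B(x_0,r)) > 0$, and as the bound $\gamma^{-k}\mu(B(x_0,r))$ is independent of $x$, we conclude $\inf\{\mu(B(x,r)) \mid x \in E\} \geq \gamma^{-k}\mu(B(x_0,r)) > 0$. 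There is no genuine obstacle here; the only points requiring a word of care are the reduction to the nonempty case and the observation that a closed ball, containing an open ball of the same radius, inherits positivity of measure — everything else is a direct iteration of the doubling inequality.
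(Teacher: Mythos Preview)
Your proof is correct and follows essentially the same route as the paper: use boundedness of $E$ to compare every ball $B(x,r)$ with a single fixed reference set, then iterate the doubling inequality a fixed number of times. The only difference is cosmetic—the paper takes the diameter $R$ of $E$ and bounds below by $\gamma^{-n}\mu(E)$ via $E \subset B(x,R)$, whereas you bound below by $\gamma^{-k}\mu(B(x_0,r))$; your choice has the minor advantage that positivity of the lower bound follows directly from the standing hypothesis on balls.
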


\begin{proof}
Let $R < \infty$ be the diameter of $E$,
 so for any $x \in E$, $E \subset B(x,R)$,
 which implies that $\mu(E) \leq \mu(B(x,R))$.
Taking $n \in \N$ such that $R \leq 2^nr$, since $X$ is doubling,
 we have that
 $$\mu(B(x,r)) \geq \gamma^{-n}\mu(B(x,2^nr)) \geq \gamma^{-n}\mu(B(x,R)) \geq \gamma^{-n}\mu(E),$$
 where $\gamma$ is the doubling constant of $X$.
\end{proof}

Fix a point $x \in X$ and a positive number $r > 0$.
If $\mu(B(x,r) \triangle B(y,r)) \to 0$ as a point $y \in X$ tends to $x$,
 then $\mu(B(y,r)) \to \mu(B(x,r))$ because
 \begin{align*}
  |\mu(B(x,r)) - \mu(B(y,r))| &= |\mu(B(x,r) \setminus B(y,r)) - \mu(B(y,r) \setminus B(x,r))|\\
  &\leq \mu(B(x,r) \setminus B(y,r)) + \mu(B(y,r) \setminus B(x,r)) = \mu(B(x,r) \triangle B(y,r)).
 \end{align*}
We have the following proposition.

\begin{prop}\label{equiconti.}
Let $x \in X$ and $r > 0$.
Suppose that $F$ is a bounded set in $\lp(X)$ and that $\mu(B(x,r) \triangle B(y,r)) \to 0$ as $y \to x$.
If $p > 1$,
 then the following holds.
\begin{itemize}
 \item For each $\epsilon > 0$, there exists $\delta > 0$ such that if $d(x,y) < \delta$,
 then $|A_rf(x) - A_rf(y)| < \epsilon$ for any $f \in F$.
\end{itemize}
Additionally, if $X$ is doubling,
 and for each $\lambda > 0$,
 there exists $\sigma > 0$ such that for any $f \in F$ and any $s \in (0,\sigma)$, $\|A_sf - f\|_1 < \lambda$,
 then the above holds even if $p = 1$.
\end{prop}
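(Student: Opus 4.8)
The plan is to bound $|A_rf(x) - A_rf(y)|$ by separating the two ways in which the averaging operators at $x$ and at $y$ differ: the normalizing constants and the regions of integration. Write $\mu_x = \mu(B(x,r))$ and $\mu_y = \mu(B(y,r))$, both positive and finite since each of $B(x,r)$, $B(y,r)$ contains an open ball of radius $r$ and is contained in one of radius $2r$. Using $A_rf(x) = \mu_x^{-1}\int_{B(x,r)} f\, d\mu$, I would decompose
\[
A_rf(x) - A_rf(y) = \Bigl(\tfrac{1}{\mu_x} - \tfrac{1}{\mu_y}\Bigr)\int_{B(x,r)} f\, d\mu
 \;+\; \frac{1}{\mu_y}\Bigl(\int_{B(x,r)} f\, d\mu - \int_{B(y,r)} f\, d\mu\Bigr)
 \;=:\; I_1(f) + I_2(f).
\]
The computation preceding this proposition gives $|\mu_y - \mu_x| \le \mu(B(x,r)\triangle B(y,r)) \to 0$, hence $\mu_y \to \mu_x$; fix $\delta_0 \in (0,r]$ with $\mu_y > \mu_x/2$ whenever $d(x,y) < \delta_0$, and set $M = \sup_{f\in F}\|f\|_p < \infty$. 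By H\"older's inequality $\bigl|\int_{B(x,r)} f\, d\mu\bigr| \le \|f\chi_{B(x,r)}\|_p\, \mu_x^{1/q} \le M\mu_x^{1/q}$, so for $d(x,y) < \delta_0$,
\[
|I_1(f)| \le \frac{|\mu_y - \mu_x|}{\mu_x\mu_y}\, M\mu_x^{1/q}
 \le 2M\mu_x^{-1-1/p}\,\mu(B(x,r)\triangle B(y,r)),
\]
which tends to $0$ uniformly over $f \in F$ as $y \to x$; this holds for every $p \in [1,\infty)$.

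For $I_2(f)$ when $p > 1$ one has $q < \infty$, so applying H\"older again to $\int_{B(x,r)\setminus B(y,r)} f\, d\mu$ and to $\int_{B(y,r)\setminus B(x,r)} f\, d\mu$, together with $\mu(B(x,r)\setminus B(y,r)),\ \mu(B(y,r)\setminus B(x,r)) \le \mu(B(x,r)\triangle B(y,r))$, gives for $d(x,y) < \delta_0$
\[
|I_2(f)| \le \frac{1}{\mu_y}\cdot 2M\,\mu(B(x,r)\triangle B(y,r))^{1/q}
 \le \frac{4M}{\mu_x}\,\mu(B(x,r)\triangle B(y,r))^{1/q}.
\]
Since $t \mapsto t^{1/q}$ is continuous at $0$, this again tends to $0$ uniformly over $F$ as $y \to x$. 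Given $\epsilon > 0$, choosing $\delta \le \delta_0$ so small that $\mu(B(x,r)\triangle B(y,r))$ lies below the threshold making $|I_1(f)| + |I_2(f)| < \epsilon$ for all $f \in F$ settles the case $p > 1$.

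The case $p = 1$ is the crux: then the only available bound $|I_2(f)| \le \mu_y^{-1}\int_{B(x,r)\triangle B(y,r)} |f|\, d\mu$ is \emph{not} uniform in $f$, since each $\int_E |f|\, d\mu$ is absolutely continuous in $\mu(E)$ but not uniformly over $F$. This is exactly what the extra hypotheses repair. Given $\epsilon > 0$, use the hypothesis to obtain $\sigma > 0$ and then fix some $s \in (0,\sigma)$ with $\|A_sf - f\|_1 < \epsilon\mu_x/8$ for all $f \in F$. Because $\delta_0 \le r$, for $d(x,y) < \delta_0$ we have $B(x,r)\triangle B(y,r) \subseteq B(x,2r)$, and Lemma~\ref{ball} (here doubling is used) gives $m_s := \inf\{\mu(B(w,s)) \mid w \in B(x,2r)\} > 0$; hence $|A_sf(w)| \le \|f\|_1/\mu(B(w,s)) \le M/m_s$ for every $w \in B(x,2r)$ and every $f \in F$. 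Splitting $f = (f - A_sf) + A_sf$ on $B(x,r)\triangle B(y,r)$,
\[
\int_{B(x,r)\triangle B(y,r)} |f|\, d\mu
 \le \|f - A_sf\|_1 + \frac{M}{m_s}\,\mu(B(x,r)\triangle B(y,r))
 < \frac{\epsilon\mu_x}{8} + \frac{M}{m_s}\,\mu(B(x,r)\triangle B(y,r)),
\]
so $|I_2(f)| \le (2/\mu_x)$ times the right-hand side. Shrinking $\delta \le \delta_0$ so that $\mu(B(x,r)\triangle B(y,r))$ is small enough to force both $|I_2(f)| < \epsilon/2$ and $|I_1(f)| < \epsilon/2$ yields $|A_rf(x) - A_rf(y)| < \epsilon$ for all $f \in F$. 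I expect this $p = 1$ step to be the main obstacle, and its resolution hinges on combining the uniform $L^1$-approximation of $f$ by $A_sf$ with the uniform sup-bound on $A_sf$ over the bounded set $B(x,2r)$ supplied by Lemma~\ref{ball}.
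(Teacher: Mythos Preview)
Your proof is correct and follows essentially the same route as the paper: the same two-term decomposition into a ``normalizing constant'' piece and an ``integral over the symmetric difference'' piece, H\"older's inequality for the $p>1$ case, and for $p=1$ the identical trick of writing $f = (f - A_sf) + A_sf$ and combining the uniform $L^1$-approximation hypothesis with the uniform pointwise bound on $A_sf$ obtained from Lemma~\ref{ball}. The only differences are cosmetic (you place $1/\mu_y$ on the integral-difference term whereas the paper uses $1/\mu_x$, and you use $B(x,2r)$ with $\delta_0 \le r$ where the paper uses $B(x,r+1)$ with $\delta < 1$), and neither affects the argument.
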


\begin{proof}
Since $F$ is bounded,
 there is $c_1 > 0$ such that $\|f\|_p < c_1$ for every $f \in F$.
Let $c_2 = \mu(B(x,r))$.
When $p = 1$, by virtue of the additional hypothesis, we can find $s > 0$ such that $\|A_sf - f\|_1 < \epsilon c_2/4$ for every $f \in F$,
 and put $c_3 = \inf\{\mu(B(z,s)) \mid z \in B(x,r + 1)\} > 0$ due to Lemma~\ref{ball}.
By the assumption, there is $\delta \in (0,1)$ such that if $d(x,y) < \delta$,
 then
 $$\mu(B(x,r) \triangle B(y,r)) < \left\{
 \begin{array}{ll}
  \big(\frac{\epsilon c_2}{2c_1}\big)^q &\text{if } p > 1,\\
  \frac{\epsilon c_2c_3}{4c_1} &\text{if } p = 1.
 \end{array}
 \right.$$
Moreover, we may suppose that $\mu(B(y,r)) \leq 2c_2$ and $|1/\mu(B(x,r)) - 1/\mu(B(y,r))| < \epsilon/(2c_1(2c_2)^{1/q})$ because $\mu(B(y,r)) \to \mu(B(x,r))$ as $y \to x$.
To show that $\delta$ is desired, let $y \in X$ with $d(x,y) < \delta$.
As is easily observed,
 $$\bigg|\int_{B(x,r)} f(z) d\mu(z) - \int_{B(y,r)} f(z) d\mu(z)\bigg| \leq \int_{B(x,r) \triangle B(y,r)} |f(z)| d\mu(z).$$
Therefore due to H\"{o}lder's inequality,
\begin{align*}
 |A_rf(x) - A_rf(y)| &= \bigg|\frac{1}{\mu(B(x,r))}\int_{B(x,r)} f(z) d\mu(z) - \frac{1}{\mu(B(y,r))}\int_{B(y,r)} f(z) d\mu(z)\bigg|\\
 &\leq \frac{1}{\mu(B(x,r))}\bigg|\int_{B(x,r)} f(z) d\mu(z) - \int_{B(y,r)} f(z) d\mu(z)\bigg|\\
 &\ \ \ \ \ \ \ \ + \bigg|\frac{1}{\mu(B(x,r))} - \frac{1}{\mu(B(y,r))}\bigg|\bigg|\int_{B(y,r)} f(z) d\mu(z)\bigg|\\
 &\leq \frac{1}{\mu(B(x,r))}\int_{B(x,r) \triangle B(y,r)} |f(z)| d\mu(z)\\
 &\ \ \ \ \ \ \ \ + \bigg|\frac{1}{\mu(B(x,r))} - \frac{1}{\mu(B(y,r))}\bigg|\bigg(\int_{B(y,r)} |f(z)| d\mu(z)\bigg)\\
 &\leq \frac{1}{\mu(B(x,r))}\bigg(\int_{B(x,r) \triangle B(y,r)} |f(z)|^p d\mu(z)\bigg)^{1/p}\bigg(\int_{B(x,r) \triangle B(y,r)} d\mu(z)\bigg)^{1/q}\\
 &\ \ \ \ \ \ \ \ + \bigg|\frac{1}{\mu(B(x,r))} - \frac{1}{\mu(B(y,r))}\bigg|\bigg(\int_{B(y,r)} |f(z)|^p d\mu(z)\bigg)^{1/p}\bigg(\int_{B(y,r)} d\mu(z)\bigg)^{1/q}\\
 &\leq \frac{1}{\mu(B(x,r))}\bigg(\int_{B(x,r) \triangle B(y,r)} |f(z)|^p d\mu(z)\bigg)^{1/p}\bigg(\int_{B(x,r) \triangle B(y,r)} d\mu(z)\bigg)^{1/q}\\
 &\ \ \ \ \ \ \ \ + \bigg|\frac{1}{\mu(B(x,r))} - \frac{1}{\mu(B(y,r))}\bigg|\|f\|_p(\mu(B(y,r)))^{1/q}\\
 &< \frac{1}{\mu(B(x,r))}\bigg(\int_{B(x,r) \triangle B(y,r)} |f(z)|^p d\mu(z)\bigg)^{1/p}\bigg(\int_{B(x,r) \triangle B(y,r)} d\mu(z)\bigg)^{1/q}\\
 &\ \ \ \ \ \ \ \ + \frac{\epsilon}{2c_1(2c_2)^{1/q}} \cdot c_1(2c_2)^{1/q}\\
 &< \frac{1}{\mu(B(x,r))}\bigg(\int_{B(x,r) \triangle B(y,r)} |f(z)|^p d\mu(z)\bigg)^{1/p}\bigg(\int_{B(x,r) \triangle B(y,r)} d\mu(z)\bigg)^{1/q} + \frac{\epsilon}{2}.
\end{align*}
In the case where $p > 1$,
\begin{multline*}
 \frac{1}{\mu(B(x,r))}\bigg(\int_{B(x,r) \triangle B(y,r)} |f(z)|^p d\mu(z)\bigg)^{1/p}\bigg(\int_{B(x,r) \triangle B(y,r)} d\mu(z)\bigg)^{1/q}\\
 \leq \frac{1}{\mu(B(x,r))}\|f\|_p(\mu(B(x,r) \triangle B(y,r))^{1/q} < \frac{1}{c_2} \cdot c_1 \cdot \frac{\epsilon c_2}{2c_1} = \frac{\epsilon}{2}.
\end{multline*}
In the case where $p = 1$,
\begin{align*}
 \int_{B(x,r) \triangle B(y,r)} |f(z)| d\mu(z) & \leq \int_{B(x,r) \triangle B(y,r)} |f(z) - A_sf(z)| d\mu(z) + \int_{B(x,r) \triangle B(y,r)} |A_sf(z)| d\mu(z)\\
 &\leq \|A_sf - f\|_1 + \int_{B(x,r) \triangle B(y,r)} \bigg(\frac{1}{\mu(B(z,s))}\int_{B(z,s)} |f(w)| d\mu(w)\bigg) d\mu(z)\\
 &\leq \|A_sf - f\|_1 + \int_{B(x,r) \triangle B(y,r)} \frac{1}{c_3}\|f\|_1 d\mu(z)\\
 &\leq \|A_sf - f\|_1 + \frac{1}{c_3}\|f\|_1\mu(B(x,r) \triangle B(y,r)) < \frac{\epsilon c_2}{4} + \frac{1}{c_3} \cdot c_1 \cdot \frac{\epsilon c_2c_3}{4c_1} = \frac{\epsilon c_2}{2},
\end{align*}
and hence
 $$\frac{1}{\mu(B(x,r))}\bigg(\int_{B(x,r) \triangle B(y,r)} |f(z)|^p d\mu(z)\bigg)^{1/p}\bigg(\int_{B(x,r) \triangle B(y,r)} d\mu(z)\bigg)^{1/q} \leq \frac{1}{c_2} \cdot \frac{\epsilon c_2}{2} = \frac{\epsilon}{2}.$$
It follows that $|A_rf(x) - A_rf(y)| < \epsilon$.
We complete the proof.
\end{proof}

The next lemma will be used in the proof of Proposition~\ref{av.rel.cpt.}.

\begin{lem}\label{rel.cpt.}
Let $x \in X$ and $r > 0$.
Suppose that $F$ is a bounded subset of $\lp(X)$.
Then the set $\{A_rf(x) \mid f \in F\}$ is relatively compact in $\R$.
\end{lem}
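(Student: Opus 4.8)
The plan is to prove the stronger (and here equivalent) statement that $\{A_rf(x) \mid f \in F\}$ is a \emph{bounded} subset of $\R$; since $\R$ is finite-dimensional, the Heine--Borel theorem then gives relative compactness. So the whole proof reduces to producing a bound on $|A_rf(x)|$ that is uniform in $f \in F$.

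First I would recall the pointwise estimate established just before Lemma~\ref{est.}, namely
 $|A_rf(x)| \leq \|f\chi_{B(x,r)}\|_p(\mu(B(x,r)))^{-1/p}$,
 and observe that $\|f\chi_{B(x,r)}\|_p \leq \|f\|_p$ because $|f\chi_{B(x,r)}| \leq |f|$ $\mu$-almost everywhere. Since $F$ is bounded in $\lp(X)$, there is $c > 0$ with $\|f\|_p \leq c$ for all $f \in F$, whence $|A_rf(x)| \leq c(\mu(B(x,r)))^{-1/p}$ for every $f \in F$.

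It remains to check that the constant $(\mu(B(x,r)))^{-1/p}$ is finite and positive, i.e. that $0 < \mu(B(x,r)) < \infty$ for the closed ball $B(x,r)$. This follows from the standing hypothesis on $X$: the open ball $\{y \in X \mid d(x,y) < r\}$ has positive, finite measure and is contained in $B(x,r)$, which in turn is contained in the open ball $\{y \in X \mid d(x,y) < 2r\}$ of finite measure; hence $0 < \mu(B(x,r)) < \infty$. Combining with the previous step, $\{A_rf(x) \mid f \in F\}$ is contained in the bounded interval $[-c(\mu(B(x,r)))^{-1/p},\, c(\mu(B(x,r)))^{-1/p}]$, and is therefore relatively compact in $\R$.

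There is no real obstacle in this argument; the only point that warrants a sentence of care is deducing $0 < \mu(B(x,r)) < \infty$ for the \emph{closed} ball from the assumption phrased for open balls, which is immediate by the sandwiching just indicated.
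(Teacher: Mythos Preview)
Your proof is correct and follows essentially the same approach as the paper: bound $|A_rf(x)|$ uniformly over $f\in F$ via the pointwise estimate $|A_rf(x)|\leq\|f\chi_{B(x,r)}\|_p(\mu(B(x,r)))^{-1/p}\leq c(\mu(B(x,r)))^{-1/p}$, then conclude relative compactness from boundedness in $\R$. Your version simply spells out a few details (the inequality $\|f\chi_{B(x,r)}\|_p\leq\|f\|_p$ and the finiteness/positivity of $\mu(B(x,r))$) that the paper leaves implicit.
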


\begin{proof}
Because $F$ is bounded,
 we can choose $c > 0$ so that $\|f\|_p < c$ for every $f \in F$.
Then
 $$|A_rf(x)| \leq \|f\chi_{B(x,r)}\|_p(\mu(B(x,r)))^{-1/p} < c(\mu(B(x,r)))^{-1/p}.$$
Hence $\{A_rf(x) \mid f \in F\}$ is bounded in $\R$,
 so it is relatively compact.
\end{proof}

The following proposition is a key ingredient in proving the ``if'' part of Theorem~\ref{cpt.}.

\begin{prop}\label{av.rel.cpt.}
Let $X$ be a doubling metric measure space, $F \subset \lp(X)$ be a bounded subset and $r > 0$.
Suppose that $E \subset X$ is bounded and that for each $x \in X$, $\mu(B(x,r) \triangle B(y,r)) \to 0$ as $y$ tends to $x$.
Then $\{(A_rf)\chi_E \mid f \in F\}$ is relatively compact in $\lp(X)$ when $p > 1$.
Additionally, if for every $\epsilon > 0$,
 there is $\delta > 0$ such that for any $f \in F$ and any $r \in (0,\delta)$, $\|A_rf - f\|_1 < \epsilon$,
 then the above is valid even if $p = 1$.
\end{prop}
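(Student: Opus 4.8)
The plan is to prove the conclusion by showing that every sequence drawn from $\{(A_rf)\chi_E \mid f \in F\}$ has a subsequence converging in $\lp(X)$; since $\lp(X)$ is a Banach space, hence a complete metric space, this gives relative compactness. The argument rests on three ingredients: a uniform pointwise bound on the average functions over $E$, the equicontinuity furnished by Proposition~\ref{equiconti.}, and the dominated convergence theorem.

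First I would record the elementary finiteness facts. Since $E$ is bounded it lies in a closed ball, so $\mu(E) < \infty$; and since $X$ is doubling, a standard packing argument (disjoint half-radius balls in a fixed ball have a uniform lower measure bound by the doubling property) shows every bounded subset of $X$ is totally bounded, so in particular $X$, and hence $E$, is separable. By Lemma~\ref{ball}, $m := \inf\{\mu(B(x,r)) \mid x \in E\} > 0$. Writing $c := \sup_{f \in F}\|f\|_p < \infty$, the estimate $|A_rf(x)| \leq \|f\chi_{B(x,r)}\|_p(\mu(B(x,r)))^{-1/p} \leq \|f\|_p(\mu(B(x,r)))^{-1/p}$ established just before Lemma~\ref{est.} yields $|A_rf(x)| \leq c\,m^{-1/p} =: M$ for all $x \in E$ and all $f \in F$; thus the functions $(A_rf)\chi_E$, $f \in F$, are all dominated by the integrable function $M\chi_E$.

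Next I would apply Proposition~\ref{equiconti.}: for every $x \in E$ and every $\epsilon > 0$ there is $\delta_x > 0$ such that $d(x,y) < \delta_x$ implies $|A_rf(x) - A_rf(y)| < \epsilon$ for all $f \in F$. When $p > 1$ this is immediate from the hypothesis $\mu(B(x,r) \triangle B(y,r)) \to 0$; when $p = 1$, the additional hypothesis assumed in the present proposition is precisely the extra assumption required by the second half of Proposition~\ref{equiconti.}, so the same conclusion holds. Now fix any sequence $(f_n)$ in $F$ and a countable dense set $\{z_k \mid k \in \N\} \subseteq E$. By the bound $M$ the reals $A_rf_n(z_1)$ lie in a compact interval, so a subsequence converges; iterating over $z_2, z_3, \dots$ and passing to a diagonal subsequence produces a subsequence $(f_{n_j})$ for which $(A_rf_{n_j}(z_k))_j$ converges for every $k$. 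Equicontinuity at each point of $E$ then upgrades this to pointwise convergence on all of $E$: given $x \in E$ and $\epsilon > 0$, choosing $z_k$ with $d(x,z_k) < \delta_x$ gives $|A_rf_{n_j}(x) - A_rf_{n_i}(x)| < 2\epsilon + |A_rf_{n_j}(z_k) - A_rf_{n_i}(z_k)|$, which is $< 3\epsilon$ for $i,j$ large, so $(A_rf_{n_j}(x))_j$ is Cauchy. Let $g$ be the pointwise limit on $E$, extended by $0$ off $E$; then $g$ is $\mathcal{M}$-measurable with $|g| \leq M\chi_E$, so $g \in \lp(X)$, and since $|(A_rf_{n_j})\chi_E - g|^p \leq (2M)^p\chi_E$ with $\mu(E) < \infty$, the dominated convergence theorem gives $\|(A_rf_{n_j})\chi_E - g\|_p \to 0$. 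As $(f_n)$ was arbitrary, $\{(A_rf)\chi_E \mid f \in F\}$ is relatively compact in $\lp(X)$, and the additional hypothesis handles the case $p = 1$ at the one point where it was invoked.

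The step I expect to require the most care is the equicontinuity input and how it is used: the family $\{A_rf \mid f \in F\}$ is only equicontinuous at each point of $E$, not uniformly, because the modulus from Proposition~\ref{equiconti.} depends on $x$ through $\mu(B(x,r))$ and through the rate at which $\mu(B(x,r)\triangle B(y,r))$ and $\mu(B(y,r))$ vary near $x$. Hence one cannot hope for sup-norm precompactness on $\cl E$ (which in any case need not be compact, as $X$ is not assumed complete); the point is that pointwise equicontinuity plus separability of $E$ is already enough to push subsequential pointwise a.e.\ convergence through the dominated convergence theorem, which is exactly what $\lp$-precompactness needs. A secondary bookkeeping point is simply verifying that the hypothesis ``$\|A_rf - f\|_1 < \epsilon$ for all $f \in F$, $r \in (0,\delta)$'' assumed here is literally the one demanded by Proposition~\ref{equiconti.} in the case $p = 1$.
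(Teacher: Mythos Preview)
Your argument is correct and constitutes a genuinely different proof from the paper's. The paper establishes total boundedness directly: it uses Vitali's covering theorem to produce finitely many pairwise disjoint balls $B(x_i,r_i)$ with $r_i<\delta(x_i)$ that cover $E$ up to a set of small measure, then builds a finite $\epsilon$-net of simple functions $\sum a_{(i,j(i))}\chi_{B(x_i,r_i)\cap E}$ by discretizing the relatively compact sets $\{A_rf(x_i)\mid f\in F\}$ (Lemma~\ref{rel.cpt.}), and finally estimates $\|(A_rf)\chi_E-\phi\|_p$ by splitting into the covered and uncovered parts of $E$. Your route is an Arzel\`a--Ascoli-style sequential compactness argument: you exploit separability of $E$ (which you correctly deduce from the measure-doubling hypothesis via a packing bound), extract a diagonal subsequence converging at a countable dense set, upgrade to everywhere-on-$E$ convergence via the pointwise equicontinuity of Proposition~\ref{equiconti.}, and conclude with dominated convergence using the uniform bound $M\chi_E$. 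Both proofs use Lemma~\ref{ball} and Proposition~\ref{equiconti.} as the essential inputs, but yours bypasses Vitali's covering theorem and Lemma~\ref{rel.cpt.}, and is arguably more transparent; the paper's approach has the advantage of producing an explicit finite net and staying closer to the total-boundedness formulation used elsewhere in the paper.
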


\begin{proof}
We need only to show that $\{(A_rf)\chi_E \mid f \in F\}$ is totally bounded in $\lp(X)$.
To prove it, fix any $\epsilon$.
Recall that $\mu(E) < \infty$.
Because $F$ is bonded,
 there is $c > 0$ such that $\|f\|_p < c$ for all $f \in F$.
According to Lemma~\ref{ball}, put $a = \inf\{\mu(B(x,r)) \mid x \in E\} > 0$.
By Proposition~\ref{equiconti.}, for each $x \in X$, there is $\delta(x) \in (0,1)$ such that if $d(x,y) < \delta(x)$,
 then $|A_rf(x) - A_rf(y)| < \epsilon(\mu(E))^{-1/p}/4$.
Using Vitali's covering theorem, refer to \cite[Theorem~6.20]{Ye},
 we can obtain finitely many points $x_i \in E$ and positive numbers $0 < r_i < \delta(x_i)$, $i = 1, \cdots, n$ so that $\mu(E \setminus \bigcup_{i = 1}^n B(x_i,r_i)) < a\epsilon^p/(2c)^p$ and $\{B(x_i,r_i) \mid i = 1, \cdots, n\}$ is pairwise disjoint.

Due to Lemma~\ref{rel.cpt.}, for each $i \in \{1, \cdots, n\}$, the subset $\{A_rf(x_i) \mid f \in F\}$ is totally bounded in $\R$,
 and hence we can choose $a_{(i,j(i))} \in \R$, $j(i) = 1, \cdots, m(i)$, so that
 $$\{A_rf(x_i) \mid f \in F\} \subset \bigcup_{j(i) = 1}^{m(i)} B(a_{(i,j(i))},\epsilon(\mu(E))^{-1/p}/4).$$
Now let
 $$K = \Bigg\{\sum_{i = 1}^n a_{(i,j(i))}\chi_{B(x_i,r_i) \cap E} \ \Bigg| \ \text{for each } i \in \{1, \cdots, n\}, j(i) \in \{1, \cdots, m(i)\}\Bigg\}$$
 be a finitely many collection of simple functions.
It remains to show that $\{(A_rf)\chi_E \mid f \in F\} \subset \bigcup_{\phi \in K} B(\phi,\epsilon)$.
Take any $f \in F$.
For every $i = 1, \cdots, n$, there is $j(i)$ such that $|A_rf(x_i) - a_{(i,j(i))}| < \epsilon(\mu(E))^{-1/p}/4$.
Since $r_i < \delta(x_i)$,
 for any $x \in B(x_i,r_i)$,
 $$|A_rf(x) - a_{(i,j(i))}| \leq |A_rf(x) - A_rf(x_i)| + |A_rf(x_i) - a_{(i,j(i))}| < \frac{\epsilon(\mu(E))^{-1/p}}{2}.$$
Therefore we have that
\begin{align*}
 \Bigg\|\sum_{i = 1}^n (A_rf)\chi_{B(x_i,r_i) \cap E} - \sum_{i = 1}^n a_{(i,j(i))}\chi_{B(x_i,r_i) \cap E}\Bigg\|_p &= \Bigg(\sum_{i = 1}^n \int_{B(x_i,r_i) \cap E} |A_rf(x) - a_{(i,j(i))}|^p d\mu(x)\Bigg)^{1/p}\\
 &\leq \Bigg(\sum_{i = 1}^n \int_{B(x_i,r_i) \cap E} \bigg(\frac{\epsilon(\mu(E))^{-1/p}}{2}\bigg)^p d\mu(x)\Bigg)^{1/p}\\
 &= \bigg(\frac{\epsilon(\mu(E))^{-1/p}}{2}\bigg)\Bigg(\sum_{i = 1}^n \mu(B(x_i,r_i) \cap E)\Bigg)^{1/p}\\
 &\leq \bigg(\frac{\epsilon(\mu(E))^{-1/p}}{2}\bigg)(\mu(E))^{1/p} = \frac{\epsilon}{2}.
\end{align*}
Set $A = E \setminus \bigcup_{i = 1}^n B(x_i,r_i)$.
Observe that
\begin{align*}
 \Bigg\|(A_rf)\chi_E - \sum_{i = 1}^n (A_rf)\chi_{B(x_i,r_i) \cap E}\Bigg\|_p &= \bigg(\int_A |A_rf(x)|^p d\mu(x)\bigg)^{1/p}\\
 &\leq \bigg(\int_A (\|f\chi_{B(x,r)}\|_p(\mu(B(x,r)))^{-1/p})^p d\mu(x)\bigg)^{1/p}\\
 &\leq \|f\|_p\bigg(\int_A \frac{1}{\mu(B(x,r))} d\mu(x)\bigg)^{1/p} \leq c\bigg(\frac{\mu(A)}{a}\bigg)^{1/p} < \frac{\epsilon}{2}.
\end{align*}
It follows that
\begin{align*}
 &\Bigg\|(A_rf)\chi_E - \sum_{i = 1}^n a_{(i,j(i))}\chi_{B(x_i,r_i) \cap E}\Bigg\|_p\\
 &\ \ \ \ \leq \Bigg\|(A_rf)\chi_E - \sum_{i = 1}^n (A_rf)\chi_{B(x_i,r_i) \cap E}\Bigg\|_p + \Bigg\|\sum_{i = 1}^n (A_rf)\chi_{B(x_i,r_i) \cap E} - \sum_{i = 1}^n a_{(i,j(i))}\chi_{B(x_i,r_i) \cap E}\Bigg\|_p < \epsilon.
\end{align*}
The proof is completed.
\end{proof}

Now we shall prove Theorem~\ref{cpt.}.

\begin{proof}[Proof of Theorem~\ref{cpt.}]
Firstly, we will show the ``only if'' part.
The condition~(1) follows from Proposition~\ref{approx.}.
It remains to show (2).
Fix any $\epsilon > 0$.
Since $F$ is relatively compact, so totally bounded,
 we can obtain finitely many functions $f_i \in \lp(X)$, $1 \leq i \leq n$, with a bounded support $E_i$ so that for each $f \in F$, there is $i \in \{1, \cdots, n\}$ such that $\|f - f_i\|_p \leq \epsilon$.
Then the union $E = \bigcup_{i = 1}^n E_i$ is the desired bounded set.
Indeed,
 $$\|f\chi_{X \setminus E}\|_p \leq \|f\chi_{X \setminus E} - f_i\chi_{X \setminus E}\|_p + \|f_i\chi_{X \setminus E}\|_p \leq \|f - f_i\|_p \leq \epsilon,$$
 which means that (2) holds.

Next, we prove the ``if'' part.
We shall show that $F$ is totally bounded.
To prove it, fix any $\epsilon > 0$.
By the condition~(2), we can find a bounded subset $E \subset X$ such that $\|f\chi_{X \setminus E}\|_p < \epsilon/2$ for all $f \in F$.
Observe that there is $\delta > 0$ such that for each $f \in F$ and each $r \in (0,\delta)$, $\|f - A_rf\|_p < \epsilon/2$ by (1),
 and hence we get
 $$\|f - (A_rf)\chi_E\|_p \leq \|f\chi_E - (A_rf)\chi_E\|_p + \|f\chi_{X \setminus E}\|_p \leq \|f - A_rf\|_p + \|f\chi_{X \setminus E}\|_p < \epsilon.$$
On the other hand, $\{(A_rf)\chi_E \mid f \in F\}$ is totally bounded in $\lp(X)$ due to Proposition~\ref{av.rel.cpt.}.
This means that $F$ is relatively compact.
\end{proof}

\section{Topological properties of the space $\lip_b(X)$}

In this section, we shall prove that $\lip_b(X) \subset \lp(X)$ is a $\sigma$-compact subset containing topological copies of $\Q$ under our assumption.
From now on, assume that $X$ is non-empty and fix any point $x_0 \in X$.
Given a positive integer $n \in \N$, set
 $$L(n) = \{f \in \lip(X) \mid \|f\|_p \leq n, \lipc{f} \leq n \text{ and } \supp{f} \subset B(x_0,n)\},$$
 where $\lipc{f}$ is the lipschitz constant of $f$ and $\supp{f}$ is the support of $f$.
As is easily observed,
 the space $\lip_b(X) = \bigcup_{n \in \N} L(n)$.
Using Theorem~\ref{cpt.}, we can prove the following:

\begin{prop}\label{sigma-cpt.}
Let $X$ be a doubling metric measure space such that for each point $x \in X$ and each positive number $r > 0$, $\mu(B(x,r) \triangle B(y,r)) \to 0$ as $y \to x$.
Then each $L(n)$ is compact,
 and hence $\lip_b(X)$ is $\sigma$-compact.
\end{prop}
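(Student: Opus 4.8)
The plan is to deduce the compactness of each $L(n)$ from Theorem~\ref{cpt.} (applied to $F = L(n)$) together with a short argument that $L(n)$ is closed in $\lp(X)$; since $\lip_b(X) = \bigcup_{n \in \N} L(n)$, the $\sigma$-compactness is then immediate.

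First I would verify the two conditions of Theorem~\ref{cpt.} for $L(n)$. Boundedness is clear because $\|f\|_p \leq n$ for every $f \in L(n)$, and condition~(2) holds with the single bounded set $E = B(x_0,n)$, since $\supp f \subset B(x_0,n)$ forces $f\chi_{X \setminus E} = 0$. The real content is condition~(1). For $f \in L(n)$, $x \in X$ and $0 < r < 1$, write $A_rf(x) - f(x) = \mu(B(x,r))^{-1}\int_{B(x,r)}(f(y) - f(x))\,d\mu(y)$ and use $|f(y) - f(x)| \leq \lipc f \cdot d(x,y) \leq nr$ for $y \in B(x,r)$ to get $|A_rf(x) - f(x)| \leq nr$; moreover, if $d(x,x_0) > n + r$ then $f$ vanishes on all of $B(x,r)$ and at $x$, so $A_rf(x) - f(x) = 0$. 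Hence $A_rf - f$ is supported in $B(x_0,n+1)$, a set of finite measure, whence $\|A_rf - f\|_p \leq nr\,\mu(B(x_0,n+1))^{1/p}$. Choosing $\delta \in (0,1)$ with $n\delta\,\mu(B(x_0,n+1))^{1/p} < \epsilon$ — a choice independent of $f$ — gives condition~(1). Theorem~\ref{cpt.} then shows $L(n)$ is relatively compact in $\lp(X)$.

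Second, I would prove $L(n)$ is closed in $\lp(X)$. Let $f_k \in L(n)$ converge to $f$ in $\lp(X)$, and pass to a subsequence $f_{k_j}$ converging to $f$ pointwise off a null set $N$. Since every open ball has positive measure, $N$ contains no nonempty open set, so $X \setminus N$ is dense in $X$. Taking limits in $|f_{k_j}(x) - f_{k_j}(y)| \leq n\,d(x,y)$ over $x,y \in X \setminus N$ shows $f|_{X \setminus N}$ is $n$-Lipschitz, hence extends uniquely to an $n$-Lipschitz $\tilde f : X \to \R$ with $\tilde f = f$ almost everywhere; in particular $\tilde f \in \lp(X)$ and $\|\tilde f\|_p = \|f\|_p = \lim_j \|f_{k_j}\|_p \leq n$. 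Likewise each $f_{k_j}$ vanishes on the open set $X \setminus B(x_0,n)$, so $\tilde f$ vanishes on a dense subset of it and, by continuity, on all of it, giving $\supp \tilde f \subset B(x_0,n)$. Thus $\tilde f \in L(n)$ and $\tilde f = f$ in $\lp(X)$, so $L(n)$ is closed; being a closed subset of a relatively compact set, it is compact, and therefore $\lip_b(X)$ is $\sigma$-compact.

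The step I expect to be the main obstacle is the closedness argument, and within it the extraction of a genuine $n$-Lipschitz representative with support in $B(x_0,n)$ from the merely a.e.-defined limit $f$. The two facts that make this go through are that null sets have dense complement in this setting — a consequence of the standing hypothesis that open balls have positive measure — and that a Lipschitz map on a dense subset extends to the whole space with the same constant; granting these, the remaining verifications are routine, as is the estimate behind condition~(1) once one observes that $A_rf - f$ is supported in a fixed ball around $x_0$.
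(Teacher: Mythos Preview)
Your proposal is correct and follows essentially the same approach as the paper: verify the hypotheses of Theorem~\ref{cpt.} for $F = L(n)$ via the pointwise bound $|A_rf(x)-f(x)| \le nr$ supported in $B(x_0,n+1)$, and separately show $L(n)$ is closed by passing to an a.e.\ convergent subsequence, using density of the complement of a null set, and extending the Lipschitz limit. The only cosmetic difference is the order of the two steps and that you spell out the support argument for the limit via continuity, whereas the paper records $\supp f \subset B(x_0,n)$ directly from the $\lp$-convergence.
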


\begin{proof}
Let $n \in \N$.
To prove that $L(n)$ is closed in $\lp(X)$, fix any sequence $\{f_k\}$ in $L(n)$ convergeing to $f \in \lp(X)$.
We will verify that $f \in L(n)$.
Observe that $\|f\|_p \leq n$ and $\supp{f} \subset B(x_0,n)$.
Since $\|f - f_k\|_p \to 0$,
 we can replace $\{f_k\}$ with a subsequence so that $f_k \to f$ almost everywhere due to Theorem~4.9 of \cite{Br}.
Then there is $E \subset X$ such that $\mu(E) = 0$ and $f_k(x) \to f(x)$ for every $x \in X \setminus E$.
Because each $\lipc{f_k} \leq n$,
 for any $x, y \in X \setminus E$, $|f_k(x) - f_k(y)| \leq nd(x,y)$,
 which implies that $|f(x) - f(y)| \leq nd(x,y)$.
Hence $f|_{X \setminus E}$ is an $n$-lipschitz map,
 so it is a uniformly continuous map.
Since $\mu(B(x,r)) > 0$ for any $x \in X$ and any $r > 0$,
 $X \setminus E$ is dense in $X$.
It follows that the restriction $f|_{X \setminus E}$ can be extended to a uniformly continuous map $\tilde{f}$.
Then $\tilde{f}$ is an $n$-lipschitz map.
Indeed, take any $x, y \in X$.
For each $\epsilon > 0$, we can choose $x', y' \in X \setminus E$ so that $d(x',y') \leq d(x,y) + \epsilon/(3n)$, $|\tilde{f}(x) - \tilde{f}(x')| \leq \epsilon/3$, and $|\tilde{f}(y) - \tilde{f}(y')| \leq \epsilon/3$.
Then
\begin{align*}
 |\tilde{f}(x) - \tilde{f}(y)| &\leq |\tilde{f}(x) - \tilde{f}(x')| + |\tilde{f}(x') - \tilde{f}(y')| + |\tilde{f}(y') - \tilde{f}(y)|\\
 &\leq \frac{\epsilon}{3} + nd(x',y') + \frac{\epsilon}{3} \leq nd(x,y) + \epsilon.
\end{align*}
Therefore $|\tilde{f}(x) - \tilde{f}(y)| \leq nd(x,y)$,
 and hence $\tilde{f}$ is $n$-lipschitz.
Note that $\tilde{f}$ is coincident with $f$.
Thus $f \in L(n)$.

By the definition of $L(n)$, it is bounded.
Since for every $f \in L(n)$, the support $\supp{f} \subset B(x_0,n)$,
 $L(n)$ satisfies the condition~(2) of Theorem~\ref{cpt.}.
It remains to show that the condition~(1) holds.
For each $0 < \epsilon < 1$, let $\delta = \epsilon/(n\mu(B(x_0,n + 1))^{1/p})$.
Taking any $f \in L(n)$ and any $r \in (0,\delta)$, since $f$ is $n$-lipschitz,
 we have that
 \begin{align*}
  |A_rf(x) - f(x)| &= \bigg|\frac{1}{\mu(B(x,r))}\int_{B(x,r)} f\chi_{B(x,r)}(y) d\mu(y) - f(x)\bigg|\\
  &\leq \frac{1}{\mu(B(x,r))}\int_{B(x,r)} |f(y) - f(x)| d\mu(y) \leq \frac{1}{\mu(B(x,r))}\int_{B(x,r)} nd(x,y) d\mu(y)\\
  &\leq nr < n\delta = \frac{\epsilon}{\mu(B(x_0,n + 1))^{1/p}}
 \end{align*}
 for each point $x \in X$.
Remark that $\supp{A_rf} \subset B(x_0,n + 1)$.
Then
\begin{align*}
 \|A_rf - f\|_p &= \bigg(\int_{B(x_0,n + 1)} |A_rf(x) - f(x)|^p d\mu(x)\bigg)^{1/p}\\
 &< \bigg(\int_{B(x_0,n + 1)} \bigg(\frac{\epsilon}{\mu(B(x_0,n + 1))^{1/p}}\bigg)^p d\mu(x)\bigg)^{1/p} = \epsilon.
\end{align*}
By virtue of Theorem~\ref{cpt.}, $L(n)$ is compact.
\end{proof}

Next we will show that each $L(n) \subset \lp(X)$ is homeomorphic to $\Q$ by using the following characterization \cite{Tor4}.

\begin{thm}[H.~Toru\'nczyk]\label{char.Q}
A compact AR $Y$ is homeomorphic to $\Q$ if and only if the following condition holds.
\begin{itemize}
 \item[$(\ast)$] For each continuous map $f : \Q \times \{0,1\} \to Y$ and each $\epsilon > 0$, there exists a continuous map $g : \Q \times \{0,1\} \to Y$ such that $g$ is $\epsilon$-close to $f$ and $g(\Q \times \{0\}) \cap g(\Q \times \{1\}) = \emptyset$.
\end{itemize}
\end{thm}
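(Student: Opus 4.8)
The plan is to prove the two directions with very different tools. The ``only if'' direction is routine: I would check directly that $\Q$ satisfies $(\ast)$. Given $f : \Q \times \{0,1\} \to \Q$ and $\epsilon > 0$, write $\Q = \prod_{n} \I$ and note that modifying a map into $\Q$ in a single coordinate of sufficiently high index moves it by less than $\epsilon$; so, replacing the $N$-th coordinate of $f|_{\Q \times \{0\}}$ by $0$ and of $f|_{\Q \times \{1\}}$ by $1$ for large $N$ yields an $\epsilon$-close $g$ whose two images lie in the disjoint slices $\{x_N = 0\}$ and $\{x_N = 1\}$. All the substance is in the ``if'' direction, and for that I would not build a homeomorphism $Y \to \Q$ by hand; instead I would exhibit $Y$ as a cell-like image of a known copy of $\Q$ and approximate the quotient map by homeomorphisms.

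The first step is to pass to $M := Y \times \Q$. By Edwards' factor theorem, every compact AR is a Hilbert-cube factor, so $M \cong \Q$; in particular $M$ is a compact $\Q$-manifold. The coordinate projection $p : M = Y \times \Q \to Y$ is a cell-like (CE) surjection, since each fiber $\{y\} \times \Q$ is a Hilbert cube and hence cell-like. The goal is then reduced to a single statement: $p$ is a \emph{near-homeomorphism}, i.e. a uniform limit of homeomorphisms $M \to Y$. Once this is known, $Y \cong M \cong \Q$ and the theorem follows.

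To show $p$ is a near-homeomorphism I would invoke Bing's shrinking criterion: the decomposition of $M$ into the fibers $\{p^{-1}(y)\}$ is shrinkable, and hence $p$ is a near-homeomorphism, provided that for every $\epsilon > 0$ there is an ambient homeomorphism $h : M \to M$ with $\diam h(p^{-1}(y)) < \epsilon$ for all $y$ while $d(p, p \circ h) < \epsilon$. This is exactly where $(\ast)$ is spent. Since $Y$ is a compact ANR, I would first record the standard equivalence between $(\ast)$ as stated (any two maps $\Q \to Y$ can be approximated by maps with disjoint images) and the disjoint-cells property (the same for maps $\I^n \to Y$, all $n$). The shrinking homeomorphism $h$ is then built by a mesh-shrinking induction driven by the AR structure of $Y$: at each stage one uses general position in the $\Q$-manifold $M$, where finite-dimensional images sit as Z-sets, together with the disjoint-cells form of $(\ast)$, to slide the fibers that $p$ is forced to identify off one another; the Z-set unknotting theorem for $\Q$-manifolds then realizes this slide by an ambient homeomorphism of controlled size, and summing the controlled corrections gives the required $h$.

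I expect the main obstacle to be precisely this shrinking construction, which is the heart of Toru\'nczyk's CE-approximation (equivalently, $\Q$-manifold characterization) theorem. The delicate point is the conversion of the purely existential disjointness statement $(\ast)$ into honest shrinking: one must arrange the general-position adjustments and the successive unknottings so that, at every stage and simultaneously for every fiber, the fibers are compressed without the map $p \circ h$ drifting more than the allotted amount, and so that the estimates sum to a genuine verification of Bing's criterion. Edwards' factor theorem and the Z-set unknotting theorem are the two heavy external inputs; granting them, the remaining work is to organize the inductive shrinking and to verify that the AR hypothesis (which supplies the CE structure of $p$ and the extensions needed at each step) together with $(\ast)$ (which supplies disjointness at every scale) suffice to keep the limit a homeomorphism.
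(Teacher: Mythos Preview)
The paper does not prove this theorem at all: Theorem~\ref{char.Q} is stated as a known result, attributed to Toru\'nczyk with a citation to \cite{Tor4}, and then applied as a black box in Proposition~\ref{cvx.Q}. There is therefore no ``paper's own proof'' to compare your proposal against.

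That said, your outline is broadly the correct strategy and is essentially Toru\'nczyk's original argument: pass to $Y \times \Q$, invoke Edwards' theorem to recognize it as $\Q$, observe that the projection $p : Y \times \Q \to Y$ is a CE map, and then use the disjoint-cells property (equivalent to $(\ast)$ for compact ANRs) to verify Bing's shrinking criterion so that $p$ is a near-homeomorphism. Your identification of the hard step---converting $(\ast)$ into an honest shrinking of the fiber decomposition with controlled motion in the base---is accurate, and this is precisely the content of Toru\'nczyk's CE-approximation theorem for $\Q$-manifolds. The only caveat is that your sketch treats the shrinking construction at the level of a plan rather than a proof; filling in the inductive mesh-shrinking with the required $\epsilon$-bookkeeping is the substantial technical work, and you should not underestimate it. But since the paper itself simply cites \cite{Tor4}, any self-contained argument you produce goes well beyond what the paper does.
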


Recall that for a metric space $Y = (Y,d_Y)$, for functions $f : Z \to Y$ and $g : Z \to Y$, and for a positive number $\epsilon > 0$, $f$ is \textit{$\epsilon$-close} to $g$ provided that for each $z \in Z$, $d_Y(f(z),g(z)) \leq \epsilon$.
It follows from the following lemma that $L(n)$ satisfies condition~$(\ast)$ of Theorem~\ref{char.Q} immediately.

\begin{lem}\label{dcp}
Let $X$ be a doubling metric measure space with $\mu(\{x_0\}) = 0$.
For any $n \in \N$ and $0 < \epsilon < 1$, there exist maps $\Phi : L(n) \to \{f \in L(n) \mid \lipc{f} < n\}$ and $\Psi : L(n) \to \{f \in L(n) \mid \lipc{f} = n\}$ which are $\epsilon$-close to the identity map on $L(n)$.
\end{lem}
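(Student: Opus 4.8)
The plan is to exhibit two explicit deformations of $L(n)$, one that strictly decreases the Lipschitz constant and one that pins it at $n$, both moving each function by less than $\epsilon$ in the $\lp$-norm. For $\Phi$, the natural choice is a slight contraction toward zero: set $\Phi(f) = (1-t)f$ for a small fixed $t = t(n,\epsilon) \in (0,1)$. Then $\lipc{\Phi(f)} = (1-t)\lipc{f} \leq (1-t)n < n$, the support condition is preserved, and $\|\Phi(f) - f\|_p = t\|f\|_p \leq tn$, so choosing $t < \epsilon/n$ makes $\Phi$ $\epsilon$-close to $\id_{L(n)}$; one must also check $\Phi(f) \in L(n)$, i.e. $\|(1-t)f\|_p \leq n$, which is clear. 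Continuity of $\Phi$ in the $\lp$-metric is immediate from linearity. This direction is essentially free.

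The substantive construction is $\Psi$, which must force the Lipschitz constant to equal $n$ exactly while perturbing $f$ by at most $\epsilon$ in norm. The idea is to add to $f$ a fixed small ``bump'' $b$ that by itself already has Lipschitz constant $n$, localized near $x_0$ on a scale so small that its $\lp$-norm is below $\epsilon$ — here the hypothesis $\mu(\{x_0\}) = 0$ together with continuity of $r \mapsto \mu(B(x_0,r))$ (available from Proposition~\ref{sigma-cpt.}'s standing assumptions, though here we only invoke $\mu(\{x_0\})=0$, which gives $\mu(B(x_0,\rho)) \to 0$ as $\rho \to 0$) guarantees we can shrink the bump's support to make $\|b\|_p$ arbitrarily small. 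Concretely, take $b(x) = \max\{0, n(\rho - d(x,x_0))\}$ for small $\rho$; this is $n$-Lipschitz, supported in $B(x_0,\rho)$, with a ``corner'' at $x_0$ realizing slope $n$. The difficulty is that $\lipc{f+b}$ need not equal $n$: the steep descent of $b$ near $x_0$ could partially cancel against an ascent of $f$. The fix is to first apply a contraction as in $\Phi$ to push $\lipc{f}$ strictly below $n$ — say below $n/2$ — by a fixed factor, at a cost under $\epsilon/2$ in norm, and then add $b$ scaled so that near $x_0$ the combined function genuinely attains slope between $n/2$ and $n$; more carefully, one arranges the bump and the contraction so that $\lipc{\Psi(f)} = n$ is forced.

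The cleanest way to guarantee $\lipc{\Psi(f)} = n$ exactly, rather than merely ``close to $n$,'' is a two-stage recipe: let $g = (1-t)f$ with $t$ small, so $\lipc{g} \leq (1-t)n$; then, on a tiny ball $B(x_0,\rho)$ disjoint in its region of steepness from where $g$ is steep (using that $g$ is continuous and $g(x_0)$ is a fixed value, shrink $\rho$ so $g$ is nearly constant on $B(x_0,2\rho)$), glue in a function whose slope at $x_0$ is precisely $n$, tapering back to match $g$ outside $B(x_0,\rho)$. One checks $\Psi(f) \in L(n)$ (norm still $\leq n$ after taking $t,\rho$ small, support still inside $B(x_0,n)$), that $\Psi$ is continuous (the gluing depends continuously on $g$, hence on $f$), and that $\|\Psi(f)-f\|_p \leq t\|f\|_p + \|b\|_p < \epsilon/2 + \epsilon/2 = \epsilon$. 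The main obstacle, and the place to be careful, is arranging that the inserted corner is not flattened by the ambient behavior of $f$ near $x_0$ — i.e. proving the exact equality $\lipc{\Psi(f)} = n$ uniformly over all $f \in L(n)$ — which is why the preliminary contraction creating definite ``slack'' below $n$ is essential, and why the scale $\rho$ must be chosen after (and small relative to) a modulus-of-continuity bound valid on the compact set $L(n)$.
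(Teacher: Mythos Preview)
Your strategy is essentially the paper's: $\Phi(f) = (1-\epsilon/n)f$ is exactly what the author does, and for $\Psi$ the author likewise first contracts to force $\lipc f \leq m < n$ and then grafts onto a small ball $B(x_0,r_2)$ the cone $f(x_0) + n(r_2 - d(x,x_0))$, filling the surrounding annulus $B(x_0,r_1)\setminus B(x_0,r_2)$ by McShane's $n$-Lipschitz extension rather than an ad~hoc taper. Your remark that $\mu(\{x_0\})=0$ is used to make $\|b\|_p$ small is only half the story: it is also what guarantees $B(x_0,r_2)$ contains a point $z\neq x_0$, so that the cone \emph{realises} slope $n$ and $\lipc{\Psi(f)}=n$ exactly.

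The one place your sketch is too quick is the continuity of $\Psi$. You write that ``the gluing depends continuously on $g$, hence on $f$,'' but the gluing depends on \emph{pointwise} values of $f$ (notably $f(x_0)$, and all values entering the McShane infimum), while the topology on $L(n)$ is the $\lp$-topology. Passing from $\|f-g\|_p$ small to $\sup_{B(x_0,n)}|f-g|$ small requires an argument: the paper uses the uniform $m$-Lipschitz bound together with the doubling hypothesis (to get a uniform lower bound on $\mu(B(x,\tau))$ over $x\in B(x_0,n)$) and shows that a pointwise gap of size $\sigma/2$ would force an $\lp$-gap of at least a definite $\delta$. This is the only place doubling enters the lemma, and it is not a formality. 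Relatedly, you should not invoke compactness of $L(n)$ here---under the lemma's hypotheses alone it is not yet established, and in any case the uniform Lipschitz bound is all you need.
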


\begin{proof}
First we shall construct the map $\Phi$.
Let $\Phi(f) = (1 - \epsilon/n)f$ for each $f \in L(n)$.
Obviously $\supp{\Phi(f)} \subset B(x_0,n)$.
Since $f$ is $n$-lipschitz,
 for any $x, y \in X$,
 \begin{align*}
  |\Phi(f)(x) - \Phi(f)(y)| &= \bigg|\bigg(1 - \frac{\epsilon}{n}\bigg)f(x) - \bigg(1 - \frac{\epsilon}{n}\bigg)f(y)\bigg| = \bigg(1 - \frac{\epsilon}{n}\bigg)|f(x) - f(y)|\\
  &\leq (n - \epsilon)d(x,y).
 \end{align*}
It follows that $\lipc{\Phi(f)} \leq n - \epsilon$.
Because $\|f\|_p \leq n$ for all $f \in L(n)$,
 we have that
 $$\|\Phi(f)\|_p = \bigg\|\bigg(1 - \frac{\epsilon}{n}\bigg)f\bigg\|_p = \bigg(1 - \frac{\epsilon}{n}\bigg)\|f\|_p \leq n - \epsilon,$$
 and more that
 \begin{align*}
  \|\Phi(f) - f\|_p &= \bigg(\int_X |\Phi(f)(x) - f(x)|^p d\mu(x)\bigg)^{1/p} = \bigg(\int_X \bigg|\bigg(1 - \frac{\epsilon}{n}\bigg)f(x) - f(x)\bigg|^p d\mu(x)\bigg)^{1/p}\\
  &= \frac{\epsilon}{n}\bigg(\int_X |f(x)|^p d\mu(x)\bigg)^{1/p} \leq \frac{\epsilon}{n}\|f\|_p \leq \epsilon.
 \end{align*}
For each $\lambda \in (0,1)$, letting $\delta = n\lambda/(n - \epsilon)$, we have that if $\|f - g\|_p \leq \delta$,
 then
 $$\|\Phi(f) - \Phi(g)\|_p = \bigg\|\bigg(1 - \frac{\epsilon}{n}\bigg)f - \bigg(1 - \frac{\epsilon}{n}\bigg)g\bigg\|_p = \bigg(1 - \frac{\epsilon}{n}\bigg)\|f - g\|_p \leq \bigg(1 - \frac{\epsilon}{n}\bigg)\delta = \lambda,$$
 which implies the continuity of $\Phi$.
As a result, $\Phi$ is the desired map.

Next we will construct the map $\Psi$.
Due to the similar argument as the above, we may assume that there is $m < n$ such that $\|f\|_p \leq m$ and $\lipc{f} \leq m$ for every $f \in L(n)$, and that $\epsilon \leq n - m$.
Taking a point $y_0 \in X \setminus \{x_0\}$, let
 $$r_1 = \min\bigg\{\frac{\epsilon}{3n(\mu(B(x_0,n)))^{1/p}},d(x_0,y_0),n\bigg\}$$
 and $r_2 = (n - m)r_1/n$.
For each $f \in L(n)$, we define a function $\psi(f) : X \setminus (B(x_0,r_1) \setminus B(x_0,r_2)) \to \R$ by
 $$\psi(f)(x) = \left\{
 \begin{array}{ll}
  f(x_0) + n(r_2 - d(x,x_0)) &\text{if } x \in B(x_0,r_2),\\
  f(x) &\text{if otherwise}.
 \end{array}
 \right.$$
Verify that $\lipc{\psi(f)} = n$.
Indeed, when $x, y \in X \setminus B(x_0,r_1)$,
 $$|\psi(f)(x) - \psi(f)(y)| = |f(x) - f(y)| \leq md(x,y) < nd(x,y),$$
 and when $x \in B(x_0,r_2)$ and $y \in X \setminus B(x_0,r_1)$,
 \begin{align*}
  |\psi(f)(x) - \psi(f)(y)| &= |f(x_0) + n(r_2 - d(x,x_0)) - f(y)|\\
  &\leq |f(x_0) - f(x)| + |f(x) - f(y)| + n(r_2 - d(x,x_0))\\
  &\leq md(x,x_0) + md(x,y) + n(r_2 - d(x,x_0))\\
  &= md(x,y) + (n - m)(r_1 - d(x,x_0))\\
  &\leq md(x,y) + (n - m)(d(y,x_0) - d(x,x_0))\\
  &\leq md(x,y) + (n - m)d(x,y) = nd(x,y).
 \end{align*}
Since $\mu(\{x_0\}) = 0$ and $\mu(B(x_0,r_2)) > 0$,
 the cardinality of $B(x_0,r_2)$ is greater than or equal to $2$.
Then for any $x, y \in B(x_0,r_2)$,
\begin{align*}
 |\psi(f)(x) - \psi(f)(y)| &= |f(x_0) + n(r_2 - d(x,x_0)) - f(x_0) - n(r_2 - d(y,x_0))|\\
 &= n|d(y,x_0) - d(x,x_0)| \leq nd(x,y),
 \end{align*}
 and taking any $z \in B(x_0,r_2) \setminus \{x_0\}$, we have that
 $$|\psi(f)(x_0) - \psi(f)(z)| = |f(x_0) + n(r_2 - d(x_0,x_0)) - f(x_0) - n(r_2 - d(z,x_0))| = nd(z,x_0).$$
Hence $\lipc{\psi(f)} = n$.

According to McShane's lipschitz extension \cite{McS} (cf.~\cite[Theorem~13.12]{Ye}), the desired map $\Psi : L(n) \to L(n)$ can be defined as follows:
 $$\Psi(f)(x) = \left\{
 \begin{array}{ll}
  \psi(f)(x) &\text{if } x \in X \setminus (B(x_0,r_1) \setminus B(x_0,r_2)),\\
  \inf_{a \in X \setminus (B(x_0,r_1) \setminus B(x_0,r_2))}(\psi(f)(a) + nd(a,x)) &\text{if otherwise}.
 \end{array}
 \right.$$
Recall that $\lipc{\Psi(f)} = n$.
It immediately follows from the definition that $\Psi(f)(x) = f(x) = 0$ for all $x \in X \setminus B(x_0,n)$.
We shall prove that $\|\Psi(f) - f\|_p \leq \epsilon$ for every $f \in L(n)$.
Fix any $x \in B(x_0,r_2)$,
 so
 \begin{align*}
  |\Psi(f)(x) - f(x)| &= |f(x_0) + n(r_2 - d(x,x_0)) - f(x)| \leq |f(x_0) - f(x)| + n(r_2 - d(x,x_0))\\
  &\leq md(x,x_0) + n(r_2 - d(x,x_0)) \leq (m + n)r_2.
 \end{align*}
Taking each $x \in X \setminus B(x_0,r_1)$, we get that
 $$|\Psi(f)(x) - f(x)| = |f(x) - f(x)| = 0.$$
Let any $x \in B(x_0,r_1) \setminus B(x_0,r_2)$ and any $a \in X \setminus (B(x_0,r_1) \setminus B(x_0,r_2))$.
In the case that $a \in B(x_0,r_2)$,
\begin{align*}
 |\psi(f)(a) - f(x)| &= |f(x_0) + n(r_2 - d(a,x_0)) - f(x)| \leq |f(x_0) - f(x)| + n(r_2 - d(a,x_0))\\
 &\leq md(x,x_0) + n(r_2 - d(a,x_0)) \leq n(d(a,x) + r_2).
\end{align*}
In the case that $a \in X \setminus B(x_0,r_1)$,
 $$|\psi(f)(a) - f(x)| = |f(a) - f(x)| \leq md(a,x).$$
Therefore we have
 $$-nr_2 \leq \psi(f)(a) + nd(a,x) - f(x) \leq 2nd(a,x) + nr_2,$$
 which implies that
 \begin{align*}
  |\Psi(f)(x) - f(x)| &= \bigg|\inf_{a \in X \setminus (B(x_0,r_1) \setminus B(x_0,r_2))}(\psi(f)(a) + nd(a,x)) - f(x)\bigg|\\
  &\leq 2nd(x,x_0) + nr_2 \leq n(2r_1 + r_2) \leq 3nr_1.
 \end{align*}
It follows that
\begin{align*}
 \|\Psi(f) - f\|_p &= \bigg(\int_X |\Psi(f)(x) - f(x)|^p d\mu(x)\bigg)^{1/p} \leq \bigg(\int_{B(x_0,n)} (3nr_1)^p d\mu(x)\bigg)^{1/p}\\
 &= 3nr_1(\mu(B(x_0,n)))^{1/p} \leq \epsilon,
\end{align*}
 and that
 $$\|\Psi(f)\|_p \leq \|\Psi(f) - f\|_p + \|f\|_p \leq \epsilon + m \leq n.$$

To show the continuity of $\Psi$, fix any $f \in L(n)$ and any $\lambda > 0$.
Set $\sigma = \lambda/(\mu(B(x_0,n))^{1/p}$ and $\tau = \min\{\sigma/(8m),1\}$.
By the same argument as Lemma~\ref{ball}, we can choose $k \in \N$ so that $\gamma^{-k}\mu(B(x_0,n)) \leq \mu(B(x,\tau))$ for every $x \in B(x_0,n)$,
 where $\gamma$ be the doubling constant of $X$.
Let $\delta = \sigma\gamma^{-k/p}(\mu(B(x_0,n)))^{1/p}/4$.
Taking every $g \in L(n)$ such that $\|g - f\|_p \leq \delta$, we can obtain that for each $x \in B(x_0,n)$, $|f(x) - g(x)| \leq \sigma/2$.
Indeed, suppose not,
 so there exists $y \in B(x_0,n)$ such that $|f(y) - g(y)| > \sigma/2$.
Then for all $x \in B(y,\tau)$,
$$|f(x) - g(x)| \geq |f(y) - g(y)| - |f(x) - f(y)| - |g(x) - g(y)| > \frac{\sigma}{2} - 2md(x,y) \geq \frac{\sigma}{2} - 2m\tau \geq \frac{\sigma}{4}$$
 because $f$ and $g$ are $m$-lipschiz.
Thus
\begin{align*}
 \delta &\geq \|f - g\|_p \geq \bigg(\int_{B(y,\tau)} |f(x) - g(x)|^p d\mu(x)\bigg)^{1/p} > \frac{\sigma(\mu(B(y,\tau)))^{1/p}}{4}\\
 &\geq \frac{\sigma\gamma^{-k/p}(\mu(B(x_0,n)))^{1/p}}{4} = \delta,
\end{align*}
 which is a contradiction.
Hence for every $x \in B(x_0,r_2)$,
\begin{align*}
 |\Psi(f)(x) - \Psi(g)(x)| &= |f(x_0) + n(r_2 - d(x,x_0)) - g(x_0) - n(r_2 - d(x,x_0))|\\
 &= |f(x_0) - g(x_0)| \leq \frac{\sigma}{2}.
\end{align*}
For any $x \in B(x_0,r_1) \setminus B(x_0,r_2)$, we can find $b \in X \setminus (B(x_0,r_1) \setminus B(x_0,r_2))$ such that
$$\Psi(f)(x) = \inf_{a \in X \setminus (B(x_0,r_1) \setminus B(x_0,r_2))}(\psi(f)(a) + nd(a,x)) \geq \psi(f)(b) + nd(b,x) - \sigma/2.$$
Verify that if $b \in B(x_0,r_2)$,
 then
 \begin{align*}
  |\psi(f)(b) - \psi(g)(b)| &= |f(x_0) + n(r_2 - d(b,x_0)) - g(x_0) - n(r_2 - d(b,x_0))|\\
  &= |f(x_0) - g(x_0)| \leq \frac{\sigma}{2},
 \end{align*}
 and that if $b \in X \setminus B(x_0,r_1)$,
 then
 $$|\psi(f)(b) - \psi(g)(b)| = |f(b) - g(b)| \leq \frac{\sigma}{2}.$$
Thus we get
\begin{align*}
 \Psi(f)(x) &\geq \psi(f)(b) + nd(b,x) - \frac{\sigma}{2} \geq \psi(g)(b) + nd(b,x) - \sigma\\
 &\geq \inf_{a \in X \setminus (B(x_0,r_1) \setminus B(x_0,r_2))}(\psi(g)(a) + nd(a,x)) - \sigma = \Psi(g)(x) - \sigma.
\end{align*}
Similarly, $\Psi(g)(x) \geq \Psi(f)(x) - \sigma$,
 and hence $|\Psi(f)(x) - \Psi(g)(x)| \leq \sigma$.
Moreover, for each $x \in B(x_0,n) \setminus B(x_0,r_1)$,
 $$|\Psi(f)(x) - \Psi(g)(x)| = |f(x) - g(x)| \leq \frac{\sigma}{2}.$$
Therefore we have that
\begin{align*}
 \|\Psi(f) - \Psi(g)\|_p^p &= \int_{B(x_0,n)} |\Psi(f)(x) - \Psi(g)(x)|^p d\mu(x) + \int_{X \setminus B(x_0,n)} |\Psi(f)(x) - \Psi(g)(x)|^p d\mu(x)\\
 &= \int_{B(x_0,n)} \sigma^p d\mu(x) \leq \sigma^p\mu(B(x_0,n)) \leq \lambda^p,
\end{align*}
 so $\|\Psi(f) - \Psi(g)\|_p \leq \lambda$.
Consequently, $\Psi$ is continuous.
Thus the proof is completed.
\end{proof}

Now we show the following:

\begin{prop}\label{cvx.Q}
Let $X$ be a doubling metric measure space with $\mu(\{x_0\}) = 0$ such that for all $x \in X$ and $r > 0$, $\mu(B(x,r) \triangle B(y,r)) \to 0$ as $y$ tends to $x$.
Then for every $n \in \N$, the subset $L(n)$ is homeomorphic to $\Q$.
\end{prop}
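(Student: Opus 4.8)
The plan is to apply Toru\'nczyk's characterization (Theorem~\ref{char.Q}), so I first need to see that $L(n)$ is a compact AR and then verify the disjoint-cells condition $(\ast)$. Compactness is exactly Proposition~\ref{sigma-cpt.}, whose hypotheses are included in those of the present statement. For the AR property I would observe that $L(n)$ is a convex subset of the Banach space $\lp(X)$. Since every open ball of $X$ has positive measure, the support of $\mu$ is all of $X$, so any two Lipschitz functions that agree almost everywhere agree everywhere; hence each class in $L(n)$ has a unique Lipschitz representative. Working with these representatives, if $f, g \in L(n)$ and $t \in [0,1]$, then $tf + (1-t)g$ still satisfies $\|tf + (1-t)g\|_p \leq n$, $\lipc(tf + (1-t)g) \leq n$, and $\supp(tf + (1-t)g) \subset B(x_0,n)$, so $L(n)$ is convex. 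A compact convex subset of a metrizable locally convex topological vector space is an AR, hence $L(n)$ is a compact AR.

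It then remains to check $(\ast)$. Fix a continuous map $h : \Q \times \{0,1\} \to L(n)$ and $\epsilon > 0$; replacing $\epsilon$ by $\min\{\epsilon,1/2\}$ we may assume $0 < \epsilon < 1$. Since $\mu(\{x_0\}) = 0$, Lemma~\ref{dcp} furnishes continuous maps $\Phi : L(n) \to \{f \in L(n) \mid \lipc{f} < n\}$ and $\Psi : L(n) \to \{f \in L(n) \mid \lipc{f} = n\}$, each $\epsilon$-close to $\id_{L(n)}$. I would then define $g : \Q \times \{0,1\} \to L(n)$ by $g(z,0) = \Phi(h(z,0))$ and $g(z,1) = \Psi(h(z,1))$. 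Because $\Q \times \{0,1\}$ is the topological sum of two copies of $\Q$, $g$ is continuous; because $\Phi$ and $\Psi$ are $\epsilon$-close to the identity, $g$ is $\epsilon$-close to $h$. Finally $g(\Q \times \{0\}) \subset \{f \in L(n) \mid \lipc{f} < n\}$ while $g(\Q \times \{1\}) \subset \{f \in L(n) \mid \lipc{f} = n\}$, and these two sets are disjoint, so $g(\Q \times \{0\}) \cap g(\Q \times \{1\}) = \emptyset$. Thus $(\ast)$ holds and Theorem~\ref{char.Q} yields $L(n) \cong \Q$.

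The substantial work is already carried out in Lemma~\ref{dcp} (the construction of the approximating maps $\Phi$ and $\Psi$ with controlled Lipschitz constants) and in Proposition~\ref{sigma-cpt.} (compactness of $L(n)$), so the only genuinely new point in this proof is the verification that $L(n)$ is an AR. I expect the main obstacle, such as it is, to lie there: namely in justifying that one may pass to Lipschitz representatives so that $L(n)$ really is a convex subset of $\lp(X)$ on the nose, after which the standard theorem on compact convex sets applies directly and the rest of the argument is formal bookkeeping with Toru\'nczyk's criterion.
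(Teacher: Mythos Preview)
Your proposal is correct and follows essentially the same route as the paper: compactness via Proposition~\ref{sigma-cpt.}, the AR property from convexity of $L(n)$ in $\lp(X)$, and condition~$(\ast)$ via Lemma~\ref{dcp}, concluding with Toru\'nczyk's characterization. You spell out in more detail why $L(n)$ is convex and how exactly $\Phi$ and $\Psi$ yield the disjoint approximation $g$, but the architecture and the key inputs are identical to the paper's argument.
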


\begin{proof}
By Proposition~\ref{sigma-cpt.}, each $L(n)$ is compact.
As is easily observed,
 $L(n)$ is a convex subset of the Banach space $\lp(X)$,
 and hence it is an AR, refer to \cite[Theorem~6.1.1]{Sakaik11}.
According to Lemma~\ref{dcp}, $L(n)$ satisfies condition~$(\ast)$ of Theorem~\ref{char.Q},
 so it is homeomorphic to $\Q$.
\end{proof}

\section{Proof of Corollary~\ref{lip.}}

Applying the following criterion, which was proven by D.~Curtis, T.~Dobrowolski and J.~Mogilski \cite{CDM},
 we shall establish Corollary~\ref{lip.}.

\begin{thm}\label{char.cvx.}
Let $C$ be a $\sigma$-compact convex set in a completely metrizable linear space $E$.
Suppose that the closure $\cl_E{C}$ is an AR and not locally compact.
Then the pair $(\cl_E{C},C)$ is homeomorphic to $(\ell_2 \times \Q,\ell_2^f \times \Q)$ if $C$ contains an infinite-dimensional locally compact convex set.
\end{thm}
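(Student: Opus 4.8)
The plan is to recognize the result as an instance of the uniqueness theory for absorbing sets in $\ell_2$-manifolds, with the model pair $(\ell_2\times\Q,\ell_2^f\times\Q)$ appearing precisely because $\ell_2^f\times\Q$ is the standard absorbing set for the class $\mathcal K$ of all compact metrizable spaces. First I would pin down the ambient space $M:=\cl_E C$. Since $C$ is $\sigma$-compact it is separable, so $M$ is a separable convex set that is closed in the completely metrizable space $E$ and hence is itself completely metrizable. By hypothesis $M$ is an AR and is not locally compact; a convex set that fails to be locally compact is infinite-dimensional, and a standard lemma shows that a non-locally-compact convex set has the (strong) discrete approximation property. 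Consequently, by Toru\'nczyk's characterization of $\ell_2$ (equivalently, the Dobrowolski--Toru\'nczyk classification of convex subsets of completely metrizable linear spaces), $M$ is homeomorphic to $\ell_2$, and since $\ell_2\cong\ell_2\times\Q$ I may treat $M$ as the $\ell_2$-manifold $\ell_2\times\Q$.

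Next I would verify that $C$ is a $\mathcal K$-absorbing set in $M$, which has three ingredients. (i) $C$ is a countable union of members of $\mathcal K$: being $\sigma$-compact, $C=\bigcup_n K_n$ with each $K_n$ compact. (ii) $C$ is a $Z_\sigma$-set in $M$: in $M\cong\ell_2$ every compactum is a $Z$-set, so each $K_n$, and hence $C$, is a countable union of $Z$-sets. (iii) $C$ is strongly $\mathcal K$-universal: given a compactum $K$, a map $f\colon K\to M$, and $\varepsilon>0$, there is a $Z$-embedding $g\colon K\to C$ that is $\varepsilon$-close to $f$. Here both hypotheses are used. Because $C$ is dense in $M=\cl_E C$ and convex, a partition-of-unity convex combination of points of $C$ chosen near the values of $f$ gives a map $f_1\colon K\to C$ that is $\varepsilon/2$-close to $f$. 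To promote $f_1$ to a $Z$-embedding, bring in the infinite-dimensional locally compact convex subset $D\subseteq C$, whose closure is a copy of $\Q$ by the Klee--Keller description of locally compact convex sets: fixing an embedding $q\colon K\to D$ and replacing $f_1$ by the convex combination $(1-t)f_1+t\,q$ keeps the image in $C$ by convexity, stays $\varepsilon$-close to $f$ for small $t$, and --- using the infinitely many independent directions of $D$ to arrange general position --- yields an injective map whose image, being a compactum in the non-locally-compact space $M\cong\ell_2$, is automatically a $Z$-set. Thus $C$ absorbs $K$.

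Finally I would invoke uniqueness. The space $\ell_2^f\times\Q$ is, by Bestvina--Mogilski (and it is exactly the object studied in \cite{CDM}), the standard $\mathcal K$-absorbing set in $\ell_2\cong\ell_2\times\Q$. Since $C$ is likewise $\mathcal K$-absorbing in $M\cong\ell_2\times\Q$, the uniqueness theorem for absorbing sets provides a homeomorphism $h\colon M\to\ell_2\times\Q$ carrying $C$ onto $\ell_2^f\times\Q$. That is, $(\cl_E C,C)\cong(\ell_2\times\Q,\ell_2^f\times\Q)$, as required.

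The main obstacle is step (iii). Approximating $f$ by a map into $C$ is easy from convexity and density, but turning that approximation into an injective $Z$-embedding \emph{whose image remains inside $C$} is the delicate point: all perturbations must be carried out within $C$, and the only room available for the injectivity and general-position arguments is that supplied by $D$. This is exactly why the hypothesis demands that the locally compact convex subset be infinite-dimensional --- finite-dimensional room would let $C$ absorb only finite-dimensional compacta, producing the model $(\ell_2,\ell_2^f)$ instead of the one with the $\Q$-factor. A further technical burden, required so that the uniqueness theorem applies, is to perform (iii) in its relative form: rel a closed set $B\subseteq K$ on which $f$ already restricts to a $Z$-embedding into $C$, the embedding $g$ must agree with $f$ on $B$.
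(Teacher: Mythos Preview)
The paper does not prove this theorem at all: it is quoted verbatim as a result of Curtis, Dobrowolski and Mogilski \cite{CDM} and then applied as a black box in the proof of Corollary~\ref{lip.}. There is therefore no ``paper's own proof'' to compare your proposal against.

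That said, your outline is a faithful sketch of how this result is established in the literature. The strategy --- identify $\cl_E C$ with $\ell_2$ via Toru\'nczyk's characterization, verify that $C$ is an absorbing set for the class of compacta, and conclude by uniqueness of absorbing sets --- is exactly the framework of \cite{CDM} (and, in its later systematic form, of Bestvina--Mogilski). Your identification of step~(iii), the strong $\mathcal K$-universality in its relative form, as the crux is accurate, and your explanation of why the infinite-dimensional locally compact convex subset is needed (to provide a $\Q$-factor inside $C$ for the general-position perturbation) is the correct intuition. One minor anachronism: the Bestvina--Mogilski paper postdates \cite{CDM}, so the original argument is phrased in slightly more ad hoc language, but the content is the same.
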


We show the density of $\lip_b(X)$ in $\lp(X)$.

\begin{prop}\label{dense}
Let $X$ be a doubling metric measure space.
Suppose that for every point $x \in X$, the function $(0,\infty) \ni r \mapsto \mu(B(x,r)) \in (0,\infty)$ is continuous.
The subspace $\lip_b(X)$ is dense in $\lp(X)$.
\end{prop}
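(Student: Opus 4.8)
The plan is to carry out the classical density argument in three reductions and then approximate by Urysohn-type Lipschitz functions, keeping every approximant of bounded support. Fix a base point $x_0 \in X$ and put $W_k = \{y \in X : d(x_0,y) < k\}$ for $k \in \N$; by the standing hypothesis $\mu(W_k) < \infty$, and since every bounded subset of $X$ lies in some $W_k$ it too has finite measure. Let $f \in \lp(X)$ and $\epsilon > 0$. First, $f\chi_{W_k} \to f$ pointwise and $|f\chi_{W_k}| \le |f|$, so $\|f\chi_{W_k} - f\|_p \to 0$ by the dominated convergence theorem; fix $k$ with this quantity $< \epsilon$. Second, simple functions are dense in $\lp(X)$ (monotone approximation of $f^{+}$ and $f^{-}$ together with dominated convergence), and truncating a simple function to $W_k$ keeps it simple, supported in $W_k$, and no farther from $f\chi_{W_k}$ in norm; by linearity the problem thus reduces to approximating, within a fixed multiple of $\epsilon$ in $\lp$-norm by a member of $\lip_b(X)$, the characteristic function $\chi_E$ of an arbitrary $\mathcal{M}$-measurable set $E \subseteq W_k$ (so $\mu(E) < \infty$). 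Third, by Borel-regularity pick a Borel set $B \supseteq E$ with $\mu(B) = \mu(E)$ and replace $B$ by $B \cap \overline{W_k}$; then $B$ is a bounded Borel set and $\mu(B \setminus E) = 0$, so $\chi_B = \chi_E$ in $\lp(X)$ and it suffices to approximate $\chi_B$.

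The next ingredient is the standard regularity of finite Borel measures on metric spaces: a finite Borel measure $\nu$ on a metric space is inner regular by closed sets, i.e. for every Borel set $A$ and every $\eta > 0$ there is a closed $C \subseteq A$ with $\nu(A \setminus C) < \eta$. (Sketch: the family of Borel sets $A$ admitting, for each $\eta > 0$, a closed $C$ and open $U$ with $C \subseteq A \subseteq U$ and $\nu(U \setminus C) < \eta$ is a $\sigma$-algebra; it contains every closed set $A$, because the open sets $\{y : d(y,A) < 1/n\}$ decrease to $A$ and $\nu$ is finite; hence it is the whole Borel $\sigma$-algebra.) Applying this to the finite Borel measure $\nu(\cdot) = \mu(\cdot \cap B)$ on $X$ gives a set $C \subseteq B$, closed in $X$ and therefore bounded, with $\mu(B \setminus C) < \epsilon^{p}$.

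Finally, for $j \in \N$ set $g_j(y) = \max\{0,\ 1 - j\, d(y,C)\}$. Each $g_j$ is $j$-Lipschitz (hence $\mathcal{M}$-measurable), takes values in $[0,1]$, and vanishes outside the bounded set $\{y : d(y,C) \le 1\}$ once $j \ge 1$, so $g_j \in \lip_b(X)$. Because $C$ is closed, $g_j \to \chi_C$ pointwise, dominated by the characteristic function of a set of finite measure; hence $\|g_j - \chi_C\|_p \to 0$ by dominated convergence. For $j$ large this yields $\|g_j - \chi_B\|_p \le \|g_j - \chi_C\|_p + \mu(B \setminus C)^{1/p} < 2\epsilon$, and unwinding the three reductions produces an element of $\lip_b(X)$ within a bounded multiple of $\epsilon$ of $f$. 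I expect no serious obstacle: the argument is essentially bookkeeping, and the two points deserving attention are (a) keeping supports bounded at every stage, which is why all steps are performed inside a fixed ball $W_k$, and (b) the closed-set inner regularity of finite Borel measures on a metric space, which must be invoked or proved rather than taken for granted. I also note that neither the doubling condition nor the continuity of $r \mapsto \mu(B(x,r))$ is used here; the proof rests only on the standing assumption that open balls have positive finite measure, together with Borel-regularity.
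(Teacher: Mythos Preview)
Your argument is correct, but it is a genuinely different proof from the paper's. The paper first approximates $f$ in $\lp$-norm by a simple function of the form $\sum_{i=1}^n a_i\chi_{B(x_i,r_i)}$ with pairwise disjoint closed balls, invoking Vitali's covering theorem (and hence the doubling hypothesis); it then uses the continuity of $r\mapsto\mu(B(x,r))$ to find $\delta_i<r_i$ with $\mu(B(x_i,r_i)\setminus B(x_i,\delta_i))$ small, and finally builds the Lipschitz approximant by linearly interpolating from $a_i$ to $0$ across each annulus $B(x_i,r_i)\setminus B(x_i,\delta_i)$. Your route instead reduces to $\chi_E$ for a bounded measurable $E$, passes to a Borel hull and then to a closed $C\subseteq B$ via inner regularity of the finite Borel measure $\mu(\,\cdot\,\cap B)$ on the metric space $X$, and approximates $\chi_C$ by the Urysohn-type functions $g_j=\max\{0,1-j\,d(\cdot,C)\}$. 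Your observation at the end is right: this proof uses neither the doubling condition nor the continuity of ball measures, only the standing assumptions that balls have positive finite measure and that $\mu$ is Borel-regular. So your argument is strictly more general and arguably more elementary (no Vitali, no differentiation-type machinery). The paper's construction, by contrast, stays entirely inside the ``ball geometry'' that governs the rest of the article and produces explicit radial Lipschitz cutoffs; its hypotheses are exactly the ones already in force for Corollary~1.6, so nothing is lost there. The only point to handle carefully in your write-up is the edge case $C=\emptyset$ (trivial, take $g_j\equiv 0$), and to state cleanly that bounded sets have finite measure because any bounded set is contained in a closed ball.
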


\begin{proof}
To show the density of $\lip_b(X) \subset \lp(X)$, fix any $f \in \lp(X)$ and any $\epsilon > 0$.
By virtue of Vitali's covering theorem, see \cite[Theorem~6.20]{Ye},
 we can choose a simple function $\sum_{i = 1}^n a_i\chi_{B(x_i,r_i)}$, where each $a_i \neq 0$, each $r_i > 0$ and $\{B(x_i,r_i) \mid i = 1, \cdots, n\}$ is pairwise disjoint,
 so that $\|\sum_{i = 1}^n a_i\chi_{B(x_i,r_i)} - f\|_p \leq \epsilon/3$.
Moreover, due to the assumption, for each $i \in \{1, \cdots, n\}$, there exists $\delta_i \in (0,r_i)$ such that $\|\sum_{i = 1}^n a_i\chi_{B(x_i,\delta_i)} - \sum_{i = 1}^n a_i\chi_{B(x_i,r_i)}\|_p \leq \epsilon/3$.
Then we can define a map $g : X \to \R$ as follows:
 $$g(x) = \left\{
 \begin{array}{ll}
  a_i &\text{if } x \in B(x_i,\delta_i), i = 1, \cdots, n,\\
  \frac{-a_i(d(x,x_i) - \delta_i)}{r_i - \delta_i} + a_i &\text{if } x \in B(x_i,r_i) \setminus B(x_i,\delta_i), i = 1, \cdots, n,\\
  0 &\text{if otherwise}.
 \end{array}
 \right.$$
As is easily observed,
 $g \in \lip_b(X)$.
Since
 $$\Bigg|g(x) - \sum_{i = 1}^n a_i\chi_{B(x_i,\delta_i)}(x)\Bigg| \leq \Bigg|\sum_{i = 1}^n a_i\chi_{B(x_i,\delta_i)}(x) - \sum_{i = 1}^n a_i\chi_{B(x_i,r_i)}(x)\Bigg|$$
 for every $x \in X$,
 \begin{align*}
  \Bigg\|g - \sum_{i = 1}^n a_i\chi_{B(x_i,\delta_i)}\Bigg\|_p &= \Bigg(\int_X \Bigg|g(x) - \sum_{i = 1}^n a_i\chi_{B(x_i,\delta_i)}(x)\Bigg|^p d\mu(x)\Bigg)^{1/p}\\
  &\leq \Bigg(\int_X \Bigg|\sum_{i = 1}^n a_i\chi_{B(x_i,\delta_i)}(x) - \sum_{i = 1}^n a_i\chi_{B(x_i,r_i)}(x)\Bigg|^p d\mu(x)\Bigg)^{1/p}\\
  &= \Bigg\|\sum_{i = 1}^n a_i\chi_{B(x_i,\delta_i)} - \sum_{i = 1}^n a_i\chi_{B(x_i,r_i)}\Bigg\|_p \leq \frac{\epsilon}{3},
 \end{align*}
 we get that
 $$\|g - f\|_p \leq \Bigg\|g - \sum_{i = 1}^n a_i\chi_{B(x_i,\delta_i)}\Bigg\|_p + \Bigg\|\sum_{i = 1}^n a_i\chi_{B(x_i,\delta_i)} - \sum_{i = 1}^n a_i\chi_{B(x_i,r_i)}\Bigg\|_p + \Bigg\|\sum_{i = 1}^n a_i\chi_{B(x_i,r_i)} - f\Bigg\|_p \leq \epsilon.$$
Thus the proof is completed.
\end{proof}

Remark that for any $x, y \in X$ and any $0 < \delta < r$, if $d(x,y) < \delta$,
 then
 $$B(x,r) \triangle B(y,r) \subset B(x,r + \delta) \setminus B(x,r - \delta).$$
Hence as $y$ tends to $x$,
 $$\mu(B(x,r) \triangle B(y,r)) \leq \mu(B(x,r + \delta) \setminus B(x,r - \delta)) \to 0$$
 when $(0,\infty) \ni r \mapsto \mu(B(x,r)) \in (0,\infty)$ is continuous.
Now we shall prove Corollary~\ref{lip.}.

\begin{proof}[Proof of Corollary~\ref{lip.}]
Since $X$ is doubling and the function $(0,\infty) \ni r \mapsto \mu(B(x,r)) \in (0,\infty)$ is continuous for each fixed point $x \in X$,
 we have that $\mu(\{y \in X \mid d(x,y) = r\}) = 0$ for all $x \in X$ and $r > 0$ by \cite[Lemma~8.5]{Ye}.
Moreover, because $X$ is non-degenerate,
 $\mu(\{x\}) = 0$ for every $x \in X$,
 in particular $\mu(\{x_0\}) = 0$ for the fixed point $x_0 \in X$.
Additionally, every open ball with positive radius is of positive measure,
 so $X$ has no isolated points,
 which implies that $X$ is infinite.
Since $X$ is separable,
 the space $\lp(X)$ is homeomorphic to $\ell_2$ by Theorem~\ref{Lp}.
Observe that for each $x \in X$ and each $r > 0$, $\mu(B(x,r) \triangle B(y,r)) \to 0$ as $y \to x$.
By Proposition~\ref{sigma-cpt.}, the subspace $\lip_b(X)$ is $\sigma$-compact.
As is easily observed,
 $\lip_b(X)$ is convex in $\lp(X)$.
Due to Proposition~\ref{cvx.Q}, $\lip_b(X)$ contains the infinite-dimensional compact convex set $L(n)$ which is homeomorphic to $\Q$.
According to Proposition~\ref{dense}, $\lip_b(X)$ is dense in $\lp(X)$.
Applying Theorem~\ref{char.cvx.}, the pair $(\lp(X),\lip_b(X))$ is homeomorphic to $(\ell_2 \times \Q,\ell_2^f \times \Q)$.
\end{proof}

\end{document}